\let\proof\@undefined
\let\endproof\@undefined
\newcommand{\real}{{\mathbb{R}}}
\newcommand{\reals}{\real}
\newtheorem{theorem}{Theorem}[section]
\newtheorem{lemma}[theorem]{Lemma}
\newtheorem{remark}[theorem]{Remark}
\newtheorem{definition}[theorem]{Definition}
\newtheorem{corollary}[theorem]{Corollary}
\newcommand{\prob}[1]{\mbox{$\mathbb{P}\left[#1\right]$}} 
\newcommand{\expectation}[1]{\mbox{$\mathbb{E}\left[#1\right]$}}
\newcommand{\fil}{\mathcal F}
\newcommand{\cs}{\mathcal Z}
\newcommand{\risk}{\rho}
\title{\LARGE \bf
A Time Consistent Formulation of \\Risk Constrained Stochastic Optimal Control
}
\author{Yin-Lam Chow, Marco Pavone
\thanks{Y.-L. Chow and M. Pavone are with the Department of Aeronautics and Astronautics, Stanford University, Stanford, CA 94305, USA. Email: {\tt \{ychow, pavone\}@stanford.edu}.}}
\begin{document}

\maketitle
\thispagestyle{empty}
\pagestyle{empty}

\begin{abstract}
 Time-consistency is an essential requirement in risk sensitive optimal control problems to make rational decisions.  An optimization problem is time consistent if its solution policy does not depend on the time sequence of solving the optimization problem. On the other hand, a dynamic risk measure is time consistent if a certain outcome is considered less risky in the future implies this outcome is also less risky at current stage.
  In this paper, we study time-consistency of risk constrained problem where the risk metric is time consistent. From the Bellman optimality condition in \cite{Chow_Pavone_13_1}, we establish an analytical ``risk-to-go" that results in a time consistent optimal policy.  Finally we demonstrate the effectiveness of the analytical solution by solving Haviv's counter-example  \cite{haviv1996constrained} in time inconsistent planning.
\end{abstract}
\section{Introduction}
Stochastic Optimal Control (SOC) is concerned with sequential decision-making
under uncertainty. Consider a dynamical process that can be influenced by exogenous
noises as well as decisions made at every time step. 
The decision maker wants to optimize the behavior of the dynamical system over a certain time horizon by finding a policy that maps the history of states to optimal actions. 

In this SOC setup, we solve an optimization problem at present time and determine an optimal policy that maps states to actions subsequently to minimize cumulative cost. 
In order to obtain rational decisions, we aim to solve for a \emph{time consistent control policy} for which this solution policy is optimal to both the SOC at present and the tail-subproblems in all subsequent time steps. If such policy exists, the SOC problem is also known to be time consistent.
The property of time consistent problem is  formally stated as follows. The decision maker formulates
an optimization problem at time $k=0$ that yields a sequence of optimal decision
rules for $k=0$ to $k=N$. Then, at the next time
step $k=1$, he/she formulates a new problem starting at $k=1$ that yields a new sequence of
optimal decision rules from time steps $k=1$ to $N$. The sequence of policy is \emph{time
consistent} if the strategies obtained when solving the original problem at
time $k=0$ remain optimal for all subsequent problems. 

Recently, the concept of time-consistency has also been extended to the context of risk measures \cite{artzner2007coherent, Acciaio_11, Roorda:05, hardy2004iterated,
Riedel_03,Boda_06, Follmer_10}. In these papers, the authors formally defined the notion of time-consistency of risks, provided examples of time consistent risk measures and axiomatically justified that this property is necessary to develop rational risk assessments in stochastic processes.
In \cite{shapiro_12}, the author showed that expectation and worst case risk are the only time consistent coherent risks, and the authors in \cite{rus_09} developed a necessary and sufficient condition for time consistent risk measures.  Furthermore in \cite{Iancu_11}, the authors provided tight approximations of time inconsistent coherent risks by lower or upper-bounding them with time consistent metrics.

Note that common examples of time consistent risk measures include expectation and entropic risk measures. From \cite{rus_09}, by posing an unconstrained stochastic optimal control problem with a time consistent risk measure, one obtains a time consistent solution policy by dynamic programming. However \cite{Osogami_12} shows that a risk constrained problem is not necessarily time consistent even if  both objective function and constraints are time consistent risk measures. This results in undesirable outcomes
for example when a decision maker seeks to minimize expected loss subjected to a risk constraint, the optimal policy at present may become infeasible in future when the risk is re-evaluated. 

While time consistent policies are essential to ensure rational decisions, it has been pointed
out in \cite{haviv1996constrained, ross1989markov, sennott1995another} that in a constrained SOC setup, time consistency is not necessarily
satisfied by an optimal policy. There are several sufficient conditions to guarantee  time consistency for specific constrained SOC problems. In \cite{strotz1955myopia}, the author provided a sufficient condition for time consistency in deterministic optimal control problems. Also, \cite{Osogami_12} showed that the risk constrained problem is time consistent if the risk measures are \emph{optimality consistency}, i.e., any constraints that are feasible at present will also be also feasible in future. Furthermore \cite{haviv1996constrained,  ross1989markov} argued that a SOC problem is time consistent if constraints are satisfied at every sample history path. However the above sufficient conditions are either restricted to a small SOC problem  subclass or verifying this condition requires exponential computational complexity. 
In contrary the results in this paper shed
light to a simple and analytic sufficient condition of time-consistency for a general class of risk constrained SOC problems.

Our contributions of this paper are three-fold.
\begin{itemize}
\item First, we formulate a risk constrained SOC problem with time consistent risk measures and show that it can be solved by dynamic programming techniques (in the augmented action space). 
\item Second, by reformulating the above problem into an augmented Markov decision problem, we develop an analytical solution for the ``risk-to-go" that results a time consistent optimal control policy.
\item Third, we illustrate the effectiveness of this analytical method by solving for a time consistent optimal policy to Haviv's ``squander or save" counter-example \cite{haviv1996constrained} on time-inconsistent planning.
\end{itemize}

The rest of the paper is organized as follows. In Section \ref{sec:prelim} we provide a review of the theory of risk metrics and Markov decision processes. In Section \ref{sec:conceptual} we provide an analytical solution to the risk-to-go update that yields a time consistent optimal control policy. In Section \ref{sec:example} we further justify our solution method by solving for a time consistent optimal policy to the ``squander or save" problem. Finally, the conclusion and future work are discussed in Section \ref{sec:conc}.  

\section{Preliminaries}\label{sec:prelim}
In this section we provide some background for dynamic, time-consistent risk metrics and risk constrained SOC problems, on which we will rely extensively later in the paper.

\subsection{Markov Decision Processes}
A finite Markov Decision Process (MDP) is a four-tuple $(S, U, Q, U(\cdot))$, where $S$, the state space, is a finite set; $U$, the control space, is a finite set; for every $x\in S$, $U(x)\subseteq U$ is a nonempty set which represents the set of admissible controls when the system state is $x$; and, finally, $Q(\cdot|x,u)$ (the transition probability) is a conditional probability on $S$ given the set of admissible state-control pairs, i.e., the sets of pairs $(x,u)$ where $x\in S$ and $u\in U(x)$.

Define the space $H_k$ of admissible histories up to time $k$ by $H_k = H_{k-1} \times S\times U$, for $k\geq 1$, and $H_0=S$. A generic element $h_{0,k}\in H_k$ is of the form $h_{0,k} = (x_0, u_0, \ldots , x_{k-1}, u_{k-1}, x_k)$. Let $\Pi$ be the set of all deterministic policies with the property that at each time $k$ the control is a function of $h_{0,k}$. In other words, $\Pi := \Bigl \{ \{\pi_0: H_0 \rightarrow U,\, \pi_1: H_1 \rightarrow U, \ldots\} | \pi_k(h_{0,k}) \in U(x_k) \text{ for all } h_{0,k}\in H_k, \, k\geq 0 \Bigr\}$.

\subsection{Dynamic, Time-consistent, Risk Measures}
Consider a probability space $(\Omega, \fil, P)$, a filtration $\fil_1\subset \fil_2 \cdots \subset \fil_N \subset \fil$, and an adapted sequence of random variables $Z_k$, $k\in \{0, \cdots,N\}$. We assume that $\fil_0 = \{\Omega, \emptyset\}$, i.e., $Z_0$ is deterministic. In this paper we interpret the variables $Z_k$ as stage-wise costs. For each $k\in\{1, \cdots, N\}$, define the space of random variables with finite $p$th order moment as $\cs_k:=  L_p(\Omega, \fil_k, P)$, $p\in [1,\infty]$; also, let $\cs_{k, N}:=\cs_k \times \cdots \times \cs_N$. 

A dynamic risk measure is a sequence of monotone mappings  $\risk_{k,N}:\cs_{k, N}\rightarrow\cs_k$, $k\in\{0, \ldots,N\}$. Roughly speaking, a dynamic risk measure is \emph{time consistent} if it is such that, when a $Z$ cost sequence is deemed less risky than a $W$ cost sequence from the perspective of a future time $k$, and both sequences yield identical costs from the current time $l$ to the future time $k$, then the $Z$ sequence is deemed less risky at the current time $l$, as well. We refer to  \cite{rus_09} for a formal definition of time consistency. It turns out that dynamic, time-consistent risk metrics can be constructed by ``compounding" coherent one-step conditional risk measures, which are defined as follows.

\begin{definition}[Coherent One-step Conditional Risk Measures]
A coherent one-step conditional risk measures is a mapping $\risk_k:\cs_{k+1}\rightarrow \cs_k$, $k\in\{0,\ldots,N\}$, with the following four properties:
\begin{itemize}
\item Convexity: $\risk_k(\lambda Z + (1-\lambda)W)\leq \lambda\risk_k(Z) + (1-\lambda)\risk_k(W)$, $\forall \lambda\in[0,1]$ and $Z,W \in\cs_{k+1}$;
\item Monotonicity:  if $Z\leq W$ then $\risk_k(Z)\leq\risk_k(W)$, $\forall Z,W \in\cs_{k+1}$;
\item Translation invariance:  $\risk_k(Z+W)=Z + \risk_k(W)$, $\forall Z\in\cs_k$ and $W \in \cs_{k+1}$;
\item Positive homogeneity: $\risk_k(\lambda Z) = \lambda \risk_k(Z)$, $\forall Z \in \cs_{k+1}$ and $\lambda\geq 0$.
\end{itemize}
\end{definition} 

The compositional structure of dynamic, time-consistent risk metrics is then characterized by the following theorem.
\begin{theorem}[Dynamic, Time-consistent Risk Metrics  \cite{rus_09}]\label{thrm:tcc}
Consider, for each $k\in\{0,\cdots,N\}$, the mappings $\risk_{k,N}:\cs_{k, N}\rightarrow\cs_k$ defined as
\begin{equation}\label{eq:tcrisk}
\begin{split}
\risk_{k,N} &= Z_k + \risk_k(Z_{k+1} + \risk_{k+1}(Z_{k+2}+\ldots+\\
	&\qquad\risk_{N-2}(Z_{N-1}+\risk_{N-1}(Z_N))\ldots)),
\end{split}
\end{equation}
where the $\risk_k$'s are coherent one-step conditional risk measures. Then, the ensemble of such mappings is a dynamic, time-consistent  risk measure.
\end{theorem}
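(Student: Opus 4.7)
The plan is to verify the two properties packaged in the phrase ``dynamic, time-consistent risk measure'': first that the ensemble $\{\risk_{k,N}\}$ is a monotone dynamic risk measure, and second that it satisfies the time-consistency axiom from \cite{rus_09}, which reads: whenever $Z,W\in\cs_{k,N}$ agree coordinate-wise for $\ell=k,\ldots,m$ and $\risk_{m,N}(Z_m,\ldots,Z_N)\leq\risk_{m,N}(W_m,\ldots,W_N)$, one must have $\risk_{k,N}(Z_k,\ldots,Z_N)\leq\risk_{k,N}(W_k,\ldots,W_N)$.

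The first preparatory step I would make explicit is the one-step recursion implicit in \eqref{eq:tcrisk}:
\begin{equation*}
\risk_{k,N}(Z_k,\ldots,Z_N) \;=\; Z_k + \risk_k\bigl(\risk_{k+1,N}(Z_{k+1},\ldots,Z_N)\bigr),
\end{equation*}
with the convention $\risk_{N,N}(Z_N)=Z_N$. This reformulation follows by inspection of the nested sum in \eqref{eq:tcrisk} and is the workhorse for everything that follows; the translation-invariance property of each $\risk_k$ is what allows the leading $Z_k$ term to be pulled outside cleanly if desired, but with the recursion written as above it is not needed for the structural rewriting.

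Next I would establish monotonicity of each $\risk_{k,N}$ as a mapping $\cs_{k,N}\to\cs_k$ by a backward induction on $k$ starting from $k=N$. In the inductive step, if $Z_\ell\leq W_\ell$ for all $\ell\geq k$, then by the induction hypothesis $\risk_{k+1,N}(Z_{k+1},\ldots,Z_N)\leq\risk_{k+1,N}(W_{k+1},\ldots,W_N)$, and monotonicity of the one-step risk $\risk_k$ together with $Z_k\leq W_k$ yields the claim via the recursion. For time consistency, I would then fix $k<m$, assume $Z_\ell=W_\ell$ for $\ell=k,\ldots,m$ and $\risk_{m,N}(Z_{m:N})\leq\risk_{m,N}(W_{m:N})$, and run a backward induction from $j=m-1$ down to $j=k$ showing $\risk_{j,N}(Z_{j:N})\leq\risk_{j,N}(W_{j:N})$. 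The inductive step is just
\begin{equation*}
\risk_{j,N}(Z_{j:N}) = Z_j + \risk_j\bigl(\risk_{j+1,N}(Z_{j+1:N})\bigr) \leq W_j + \risk_j\bigl(\risk_{j+1,N}(W_{j+1:N})\bigr) = \risk_{j,N}(W_{j:N}),
\end{equation*}
where the equality $Z_j=W_j$ and monotonicity of $\risk_j$ do the work.

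I do not expect a serious obstacle: the proof is essentially a two-line backward induction driven by monotonicity, with translation invariance used only insofar as it legitimizes the one-step unfolding. The properties of convexity and positive homogeneity in the definition of a coherent one-step conditional risk measure play no role here; they are part of the hypothesis only because the statement concerns coherent measures. The one subtlety worth flagging is measurability: to make sense of nested expressions like $\risk_j(\risk_{j+1,N}(\cdot))$ one must check that $\risk_{j+1,N}(Z_{j+1:N})\in\cs_{j+1}$, which again follows by backward induction from the codomain of each $\risk_\ell$.
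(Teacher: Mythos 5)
Your proof is correct. Note that the paper itself offers no proof of this theorem --- it is imported verbatim from \cite{rus_09} --- so there is nothing internal to compare against; your argument (unfold \eqref{eq:tcrisk} into the one-step recursion $\risk_{k,N}=Z_k+\risk_k(\risk_{k+1,N})$, then propagate orderings backward using only monotonicity of each $\risk_k$, with translation invariance, convexity and positive homogeneity idle) is exactly the standard proof of this direction in the cited reference, and your remarks that the recursion also settles well-definedness/measurability and that agreement of the two cost sequences at the intermediate index is what drives the induction are both accurate.
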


In this paper we consider a (slight) refinement of the concept of dynamic, time-consistent risk metric, which involves the addition of a Markovian structure \cite{rus_09} and enables the development of dynamic programming equations. 
\begin{definition}[Markov Dynamic Risk Measures \cite{rus_09}]\label{def:Markov}
Let $\mathcal V:=L_p(S, \mathcal B, P)$ be the space of random variables on $S$ with finite $p$th moment. Given a controlled Markov process $\{x_k\}$, a dynamic, time-consistent risk metric is a Markov dynamic risk metric if each coherent one-step conditional risk measure $\risk_k:\cs_{k+1}\rightarrow \cs_k$ in \eqref{eq:tcrisk} can be written as:
\begin{equation}\label{eq:Markov}
\risk_k(V(x_{k+1})) = \sigma_k(V(x_{k+1}),x_k, Q(x_{k+1} |x_k, u_k)),
\end{equation}
for all $V(x_{k+1})\in \mathcal V$ and $u\in U(x_k)$, where $\sigma_k$ is a coherent one-step conditional risk measure on $\mathcal V$ (with the additional technical property that for every $V(x_{k+1})\in \mathcal V$ and $u\in U(x_k)$ the function $x_k \mapsto \sigma_k(V(x_{k+1}), x_k, Q(x_{k+1}|x_k, u_k))$ is an element of $\mathcal V$). 
\end{definition}
In other words, in a Markov dynamic risk measures, the evaluation of risk is \emph{not} allowed to depend on the whole past.

\subsection{Stochastic Optimal Control with Dynamic, Time-consistent Risk Constraints}

Consider an MDP and let $c : S \times U \rightarrow \reals$ and $d : S \times U \rightarrow \reals$ be functions which denote costs associated with state-action pairs. Given a policy $\pi\in \Pi$, an initial state $x_0\in S$, and an horizon $N\geq 1$, the multi-stage cost function is defined as
\[
J^{\pi}_N(x_0):=\expectation{\sum_{k=0}^{N-1}\, c(x_k, u_k)},
\]
and the risk constraint is defined as
\[
R^{\pi}_N(x_0):= \risk_{0,N}\Bigl(d(x_0,u_0), \ldots, d(x_{N-1},u_{N-1}),0\Bigr),
\]
where $\risk_{k,N}(\cdot)$, $k\in \{0,\ldots, N-1\}$, is a Markov dynamic risk metric (for simplicity, we do not consider terminal costs, even though their inclusion is straightforward).  The problem is then as follows:
\begin{quote} {\bf Optimization problem $\mathcal{OPT}$} --- Given an initial state $x_0\in S$, a time horizon $N\geq 1$, and a risk threshold $r_0 \in \reals$, solve
\begin{alignat*}{2}
\min_{\pi \in \Pi} & & \quad&J^{\pi}_N(x_0) \\  
\text{subject to} & & \quad&R^{\pi}_N(x_0) \leq r_0.
\end{alignat*}
\end{quote}
If problem $\mathcal OPT$ is not feasible, we say that its value is $\infty$. In \cite{Chow_Pavone_13_1} the authors developed a dynamic programing approach to solve this problem. To define the value functions, one needs to define the tail subproblems. For a given $k\in \{0,\ldots,N-1\}$ and a given state $x_k\in S$, we define the \emph{sub-histories} as $h_{k,j}:=(x_k, u_k, \ldots,x_j)$ for $j\in \{k,\ldots, N\}$; also, we define the \emph{space of truncated policies} as $\Pi_k:=\Bigl \{ \{\pi_k, \pi_{k+1}, \ldots\} | \pi_j(h_{k,j}) \in U(x_j) \text{ for } j\geq k \Bigr\}$. For a given stage $k$ and state $x_k$, the cost of the tail process associated with a policy $\pi\in \Pi_k$ is simply $J^{\pi}_N(x_k):=\expectation{\sum_{j=k}^{N-1}\, c(x_j, u_j)}$. The risk associated with the tail process is:
\[
R^{\pi}_N(x_k):= \risk_{k,N}\Bigl(d(x_k,u_k), \ldots, d(x_{N-1},u_{N-1}),0\Bigr).
\]
The tail subproblems are then defined as
\begin{alignat}{2}
\min_{\pi \in \Pi_k} & & \quad&J^{\pi}_N(x_k) \label{problem_SOCP}\\
\text{subject to} & & \quad&R^{\pi}_N(x_k) \leq r_k(x_k),\label{constraint_SOCP}
\end{alignat}
for a given (undetermined) threshold value $r_k(x_k) \in \reals$ (i.e., the tail subproblems are specified up to a threshold value).

For each $k\in\{0,\ldots, N-1\}$ and $x_k\in S$, we define the set of feasible constraint thresholds as 
\[
\Phi_k(x_k):=[\underline{R}_N(x_k), \infty),\quad \Phi_N(x_N):=[0,\infty),
\]
where  $\underline{R}_N(x_k):=\min_{\pi\in \Pi_k} \, R_N^{\pi}(x_k)$. One then defines the value functions as follows:
\begin{itemize}
\item If $k<N$ and $r_k \in \Phi_k(x_k)$:
\begin{alignat*}{2}
V_k(x_k, r_k)  = &\min_{\pi \in \Pi_k} & \quad&J^{\pi}_N(x_k) \\  
&\text{subject to} & &R^{\pi}_N(x_k) \leq r_k.
\end{alignat*}
\item If $k\leq N$ and $r_k \notin \Phi_k(x_k)$:
\[
V_k(x_k, r_k)  = \infty.
\]
\item When $k=N$ and $r_N\in\Phi_N(x_N)=[0,\infty]$:
\[
V_N(x_N,r_N) = 0.
\]
\end{itemize}
Let $B(S)$ denote the space of real-valued bounded functions on $S$, and $B(S \times \reals)$ denote the space of real-valued bounded functions on $S\times \reals$. For $k\in \{0, \ldots, N-1\}$, we define the dynamic programming operator $T_k[V_{k+1}] : B(S \times \reals) \mapsto B(S \times \reals)$ according to the equation:

\begin{equation}\label{eq:T}
\begin{split}
T_k[V_{k+1}]&(x_k, r_k) := \inf_{(u,r^{\prime})\in F_k(x_k, r_k)} \, \biggl\{c(x_k,u) \, \,+\\
& \ \sum_{x_{k+1}  \in S} \, Q(x_{k+1}|x_k,u)\, V_{k+1}(x_{k+1}, r^{\prime}(x_{k+1})) \biggr\},
\end{split}
\end{equation}
where $F_k$ is the set of control/threshold \emph{functions}:
\begin{equation*}
\begin{split}
F_k(x_k,& r_k):= \biggr\{(u, r^{\prime}) \Big | u\in U(x_k), r^{\prime}(x^{\prime}) \in \Phi_{k+1}(x^{\prime}) \text{ for}\\& \text{all } x^{\prime} \in S, \text{ and } d(x_k, u) + \risk_k(r^{\prime}(x_{k+1}))  \leq r_k\biggl\}.
\end{split}
\end{equation*}
If $F_k(x_k,r_k) = \emptyset$, then $T_k[V_{k+1}](x_k, r_k)=\infty$.

For a given state and threshold constraint, $F_k$ characterizes the set of feasible pairs of actions and subsequent constraint
thresholds. Feasible subsequent constraint thresholds are thresholds which if satisfied at the next stage ensure that the current
state satisfies the given constraint threshold. Note that the value functions are defined on an \emph{augmented} state space, which combines the original (discrete) states $x_k$ with the \emph{real-valued risk-to-go} states $r_k$. We will refer to the MDP problem associated with such augmented state space as augmented MDP (AMDP).  The main result in  \cite{Chow_Pavone_13_1}  is the following theorem about the correctness of value iteration for AMDP.

\begin{theorem}[Bellman's Equation with Risk Constraints  \cite{Chow_Pavone_13_1}]\label{TC_good}
For all $k\in \{0, \ldots, N-1\}$ the value  functions satisfy the Bellman's equation:
\[
V_k(x_k, r_k) = T_k[V_{k+1}](x_k, r_k).
\]
\end{theorem}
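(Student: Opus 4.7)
The plan is to establish both inequalities $V_k(x_k,r_k) \le T_k[V_{k+1}](x_k,r_k)$ and $V_k(x_k,r_k) \ge T_k[V_{k+1}](x_k,r_k)$ by exploiting the compositional form \eqref{eq:tcrisk} of the Markov dynamic risk measure, which allows the risk constraint to unroll one stage at a time in the same way as an expectation. The Markov property of Definition \ref{def:Markov} is what lets one speak of a ``risk-to-go" depending only on the current state, so that the auxiliary threshold $r'$ really is a function on $S$ rather than on a history.

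For the direction $V_k \le T_k[V_{k+1}]$, I would fix any pair $(u, r') \in F_k(x_k, r_k)$ and any $\eps > 0$. For each successor $x_{k+1}\in S$, since $r'(x_{k+1}) \in \Phi_{k+1}(x_{k+1})$, pick an $\eps$-optimal policy $\pi^{x_{k+1}} \in \Pi_{k+1}$ for the tail subproblem with threshold $r'(x_{k+1})$, i.e., $J^{\pi^{x_{k+1}}}_N(x_{k+1}) \le V_{k+1}(x_{k+1}, r'(x_{k+1})) + \eps$ and $R^{\pi^{x_{k+1}}}_N(x_{k+1}) \le r'(x_{k+1})$. Splice these into one policy $\pi \in \Pi_k$ that applies $u$ at stage $k$ and then follows $\pi^{x_{k+1}}$. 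By linearity of expectation, $J^{\pi}_N(x_k) = c(x_k,u) + \sum_{x_{k+1}} Q(x_{k+1}|x_k,u)\, J^{\pi^{x_{k+1}}}_N(x_{k+1})$, which is within $\eps$ of the expression inside the infimum in \eqref{eq:T}. By the compositional form of $\risk_{k,N}$ together with translation invariance and monotonicity of $\risk_k$, $R^{\pi}_N(x_k) = d(x_k,u) + \risk_k\bigl(R^{\pi^{x_{k+1}}}_N(x_{k+1})\bigr) \le d(x_k,u) + \risk_k\bigl(r'(x_{k+1})\bigr) \le r_k$, so $\pi$ is feasible. Taking the infimum over $(u,r') \in F_k(x_k,r_k)$ and then $\eps \to 0$ yields the desired inequality.

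For the reverse direction, take any feasible $\pi \in \Pi_k$ for $(x_k, r_k)$, let $u := \pi_k(x_k)$, and let $\pi_+^{x_{k+1}}$ denote the continuation of $\pi$ once $x_{k+1}$ has been observed. Define $r'(x_{k+1}) := R^{\pi_+^{x_{k+1}}}_N(x_{k+1})$; by the Markov property this depends on $x_{k+1}$ alone, it lies in $\Phi_{k+1}(x_{k+1})$ since $\pi_+^{x_{k+1}}$ attains it, and the compositional identity gives $d(x_k,u) + \risk_k(r'(x_{k+1})) = R^{\pi}_N(x_k) \le r_k$, so $(u,r') \in F_k(x_k, r_k)$. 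Feasibility of $\pi_+^{x_{k+1}}$ at threshold $r'(x_{k+1})$ yields $J^{\pi_+^{x_{k+1}}}_N(x_{k+1}) \ge V_{k+1}(x_{k+1}, r'(x_{k+1}))$, and therefore
\[
J^{\pi}_N(x_k) \ge c(x_k,u) + \sum_{x_{k+1}} Q(x_{k+1}|x_k,u)\, V_{k+1}(x_{k+1}, r'(x_{k+1})) \ge T_k[V_{k+1}](x_k,r_k).
\]
Taking the infimum over $\pi$ closes the argument in the finite case. The infeasible case $r_k \notin \Phi_k(x_k)$ is handled separately by showing that then $F_k(x_k,r_k) = \emptyset$, which again follows from the compositional identity and the definition of $\underline R_N$.

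The main obstacle is the $\ge$ direction: the continuation $\pi_+^{x_{k+1}}$ is \emph{a priori} history dependent, so without further structure $r'(x_{k+1})$ could depend on $(x_k,u,x_{k+1})$ rather than on $x_{k+1}$ alone, which would prevent it from belonging to $F_k$. This is precisely where Definition \ref{def:Markov} is indispensable: since each $\risk_j$ depends on the conditional law $Q(\cdot|x_j,u_j)$ only through the present state, the compounded risk-to-go from stage $k+1$ onward is expressible purely in terms of $(x_{k+1}, \pi_+^{x_{k+1}})$, making $r'$ a bona fide element of the function class required by $F_k$. A secondary subtlety, which I would address with a standard measurable-selection argument, is ensuring that the near-optimal policies $\pi^{x_{k+1}}$ in the $\le$ direction can be selected jointly so that the spliced policy lies in $\Pi_k$.
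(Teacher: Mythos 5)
Your argument is essentially correct, but note that this paper does not actually prove Theorem~\ref{TC_good}: it is imported verbatim from \cite{Chow_Pavone_13_1}, so there is no in-paper proof to match against. The closest analogue is the appendix proof of Theorem~\ref{Bellman_M_thm}, which uses the same two-inequality, policy-splicing induction you employ (construct a spliced policy to get one bound, decompose an optimal policy's continuation to get the other), so your route is exactly the one the authors rely on elsewhere. Your handling of the two directions is sound: in the ``$\leq$'' direction the feasibility of the spliced policy follows, as you say, from the recursion $\risk_{k,N}=Z_k+\risk_k(\risk_{k+1,N})$ together with monotonicity of $\risk_k$; in the ``$\geq$'' direction the identity $R^{\pi}_N(x_k)=d(x_k,u)+\risk_k\bigl(R^{\pi}_N(x_{k+1})\bigr)$ and the definition of $\underline{R}_N$ give $(u,r')\in F_k(x_k,r_k)$. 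Two of your cautionary remarks can be simplified in this setting. First, since $S$ and $U$ are finite and the horizon is finite, $\Pi_{k+1}$ is a finite set, so the minima defining $V_{k+1}$ are attained and no $\eps$-optimal selection or measurable-selection argument is needed; you may take exact minimizers $\pi^{x_{k+1}}$. Second, the worry that $r'(x_{k+1})$ might depend on $(x_k,u,x_{k+1})$ rather than on $x_{k+1}$ alone dissolves because the tail subproblem fixes $x_k$ (and hence $u=\pi_k(x_k)$), so the sub-history $h_{k,k+1}$ is determined by $x_{k+1}$; the Markov structure of Definition~\ref{def:Markov} is still what guarantees that $\risk_k$ acts on $r'$ as a function in $\mathcal V$, as you correctly identify. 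Your treatment of the infeasible case ($F_k=\emptyset$ whenever $r_k<\underline{R}_N(x_k)$, by the contrapositive of the splicing construction) is also correct.
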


Next, we present a procedure to construct optimal policies. Under the assumptions of Theorem \ref{TC_good}, for any given $x_k\in S$ and $r_k \in \Phi_k(x_k)$ (which implies that $F_k(x_k, r_k)$ is non-empty), let $u^*(x_k, r_k)$ and $r^{\prime}(x_k, r_k)(\cdot)$ be the minimizers in equation \eqref{eq:T}. Next theorem shows how to construct history dependent optimal policies.

\begin{theorem}[Optimal Policies]\label{them:optPoli}
Let $\pi \in \Pi$ be a policy recursively defined as:
\[
\pi_k(h_k) = u^*(x_k, r_k), \text{ with } r_k = r^{\prime}(x_{k-1},r_{k-1})(x_k),
\]
when $k\in \{1,\ldots, N-1\}$, and
\[
\pi(x_0) = u^*(x_0, r_0),
\]
for a given threshold $r_0\in \Phi_0(x_0)$. Then, $\pi$ is an optimal policy for problem $\mathcal{OPT}$ with initial condition $x_0$ and constraint threshold $r_0$.
\end{theorem}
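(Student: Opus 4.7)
\begin{IEEEproof}[Proof Sketch]
The plan is to verify by backward induction on $k$ that the policy $\pi$ constructed in the statement achieves $V_k(x_k,r_k)$ on the tail problem starting at stage $k$ with threshold $r_k$, along the realized history. Because the recursion $r_{k+1} = r^{\prime}(x_k,r_k)(x_{k+1})$ is history dependent (it depends on the entire trajectory $h_{0,k+1}$ through the iterated threshold updates), $\pi$ is indeed an admissible element of $\Pi$. The proof then hinges on two facts provided by Theorem~\ref{TC_good}: (i) at every augmented state $(x_k,r_k)$, the pair $(u^*(x_k,r_k), r^{\prime}(x_k,r_k)(\cdot))$ attains the infimum in the operator $T_k[V_{k+1}]$; and (ii) by construction of $F_k$, this pair satisfies the one-step feasibility inequality $d(x_k, u^*) + \risk_k\bigl(r^{\prime}(x_{k+1})\bigr) \leq r_k$ and keeps $r^{\prime}(x_{k+1}) \in \Phi_{k+1}(x_{k+1})$ for every reachable successor state.

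I would carry out the induction on the claim: for every realized history $h_{0,k}$, both $J^{\pi}_N(x_k) = V_k(x_k, r_k)$ and $R^{\pi}_N(x_k) \leq r_k$ hold. The base case $k=N$ is immediate since $V_N \equiv 0$ and the risk of the empty tail is $0$. For the inductive step, I would write
\[
J^{\pi}_N(x_k) = c(x_k, u^*) + \sum_{x_{k+1}\in S} Q(x_{k+1}|x_k,u^*)\, J^{\pi}_N(x_{k+1}),
\]
substitute the induction hypothesis $J^{\pi}_N(x_{k+1}) = V_{k+1}(x_{k+1}, r_{k+1})$ (where $r_{k+1} = r^{\prime}(x_k,r_k)(x_{k+1})$ is exactly the threshold prescribed by the policy), and recognize the right-hand side as $T_k[V_{k+1}](x_k,r_k)$ evaluated at the minimizer, which equals $V_k(x_k,r_k)$ by Theorem~\ref{TC_good}.

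For the constraint, using the Markov dynamic risk structure (\ref{eq:Markov}) together with the compositional form (\ref{eq:tcrisk}), I would unroll one step:
\[
R^{\pi}_N(x_k) = d(x_k, u^*) + \risk_k\bigl(R^{\pi}_N(x_{k+1})\bigr).
\]
By the induction hypothesis $R^{\pi}_N(x_{k+1}) \leq r_{k+1} = r^{\prime}(x_k,r_k)(x_{k+1})$ almost surely under $Q(\cdot|x_k,u^*)$, so the monotonicity of the coherent one-step risk $\risk_k$ yields $\risk_k(R^{\pi}_N(x_{k+1})) \leq \risk_k(r^{\prime}(x_{k+1}))$. Combining with the feasibility inequality encoded in $F_k(x_k, r_k)$ gives $R^{\pi}_N(x_k) \leq d(x_k,u^*) + \risk_k(r^{\prime}(x_{k+1})) \leq r_k$, completing the induction. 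Applied at $k=0$ with threshold $r_0$, this shows that $\pi$ is feasible and attains $V_0(x_0,r_0)$, which by definition equals the optimal value of $\mathcal{OPT}$; hence $\pi$ is optimal.

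The step I expect to require the most care is the last display: the almost-sure inductive bound $R^{\pi}_N(x_{k+1}) \leq r_{k+1}$ must be pushed through the nonlinear coherent functional $\risk_k$, and one must invoke the Markov property in Definition~\ref{def:Markov} to ensure that $\risk_k$ can be applied to the random threshold $r^{\prime}(x_{k+1}) \in \mathcal V$ using only the current-stage conditional distribution, decoupling the evaluation from the history that generated $r_k$. Once that is done, monotonicity and the translation invariance built into the $F_k$ constraint close the argument cleanly.
\end{IEEEproof}
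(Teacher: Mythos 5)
Your argument is correct and is essentially the standard one: the paper itself does not reprove Theorem~\ref{them:optPoli} (it is imported from \cite{Chow_Pavone_13_1}), and the proof there proceeds exactly as you propose, by backward induction simultaneously establishing $J^{\pi}_N(x_k)=V_k(x_k,r_k)$ via the attained minimizer in $T_k[V_{k+1}]$ and $R^{\pi}_N(x_k)\leq r_k$ via translation invariance, monotonicity of $\risk_k$, and the feasibility inequality in $F_k$. The only implicit hypothesis you share with the paper is that the infimum in \eqref{eq:T} is attained so that $u^*$ and $r^{\prime}$ are well defined.
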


Interestingly, if one views the constraint thresholds as state variables (whose dynamics are given in the statement of Theorem \ref{them:optPoli}), the optimal (history-dependant) policies of problem $\mathcal{OPT}$ have a Markovian structure with respect to the augmented control problem.

\section{Time consistency and conceptual risk-to-go}\label{sec:conceptual}
We start this section by demonstrating some time-inconsistency behaviors in risk-sensitive optimal control problems using several counter-examples.
\subsection{Time Inconsistent Planning Leads to Irrational Behaviors}
The most common strategy to model risk awareness in MDPs is to consider a static risk metric (i.e., a metric assessing risk from the perspective of a \emph{single} point in time) applied to the entire stream of future costs. Typical examples include variance-constrained MDPs \cite{Piunovskiy_06, Sniedovich_80, Mannor_11}, or problems with probability constraints \cite{Piunovskiy_06,  ono2008iterative, blackmore2010probabilistic, blackmore2009convex}, which are popular in the robotics community  (in these problems risk is assessed only from the perspective of the initial stage). However, since static risk metrics do not involve a reassessment of risk at subsequent decision stages they generally lead to \emph{irrational} behaviors. For example, a UV can seek to incur losses (i.e., dangerous maneuvers) or can deem as dangerous states that are indeed favorable under any realization of the underlying uncertainty. 

In this subsection, we will illustrate some irregular behaviors in risk sensitive multi-period planning by two examples.  
\begin{quote} {\bf Example 1: Variance-constrained planning} --- Given an MDP with initial state $x_0\in S$ and time horizon $N\geq 1$, solve
\begin{alignat*}{2}
\min_{\pi} & & \quad&\expectation{\sum_{k=0}^{N-1}\, c(x_k, u_k) + c_N(x_N)}\\
\text{subject to} & & \quad&\mathrm{var}\biggl(\, \sum_{k=0}^{N-1}\, d(x_k, u_k) +d_N(x_N) \, \biggr) \leq r_0,
\end{alignat*}
where $r_0 \in \reals$ is a user-provided risk threshold.
\end{quote}
Consider the example in Figure \ref{fig:vcp}. When the risk threshold $r_0$ is below 25, policy $\pi_1$ is infeasible and the optimal policy is $\pi_2$. According to policy $\pi_2$, if the decision maker does not incur a cost in the first stage it \emph{seeks to incur losses} in subsequent stages to keep the variance small. This can be seen as a consequence of the fact that Bellman's principle of optimality does not hold for this class of problems.

\begin{figure}[h]
  \centering
 \subfigure[Stage-wise constraint and objective function costs and transition probabilities  for policy $\pi_1$.]{
  \includegraphics[width =0.5\linewidth]{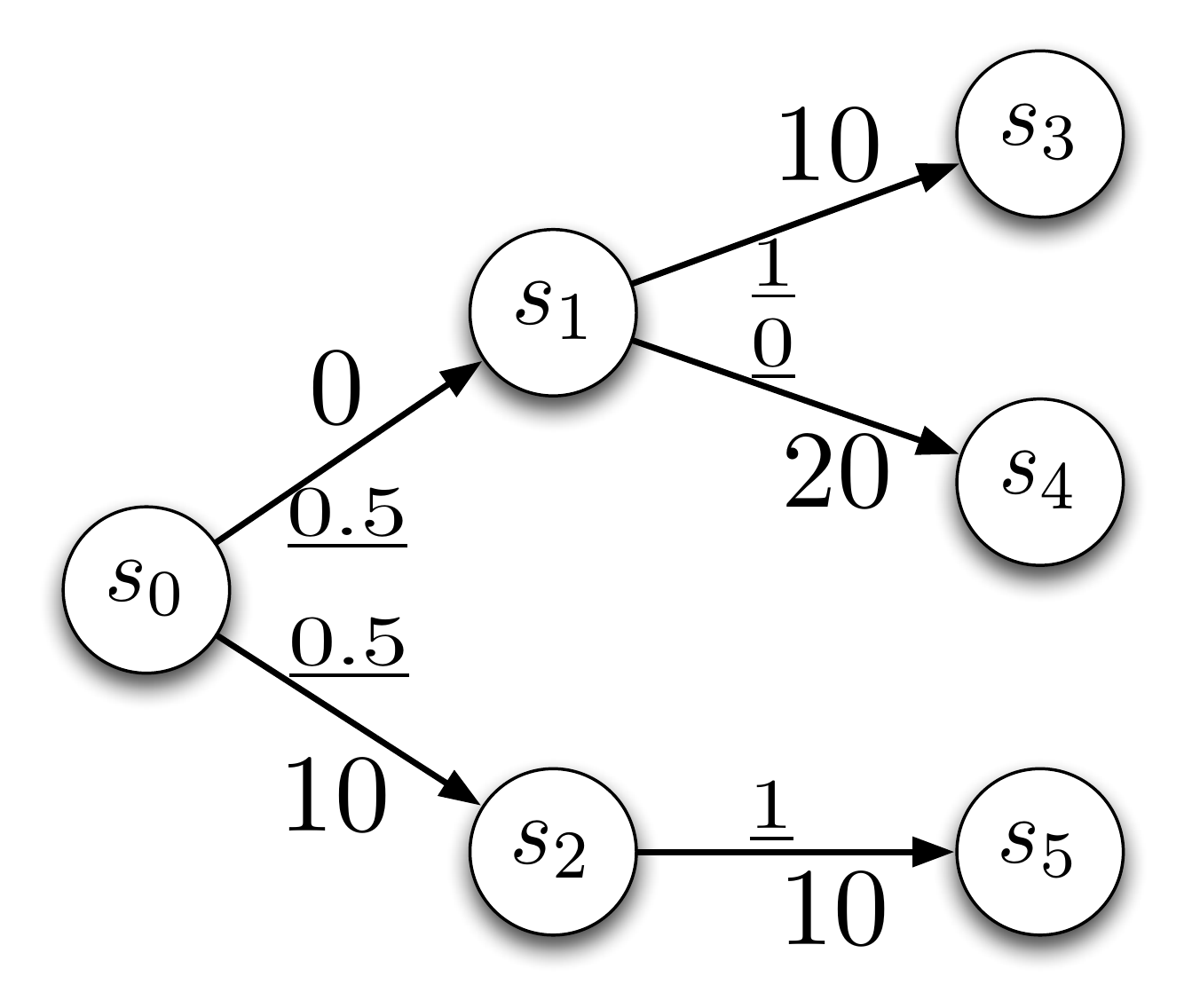}
  } \qquad \qquad \qquad
 \subfigure[Stage-wise constraint and objective function costs and transition probabilities  for policy $\pi_2$.]{
  \includegraphics[width =0.5\linewidth]{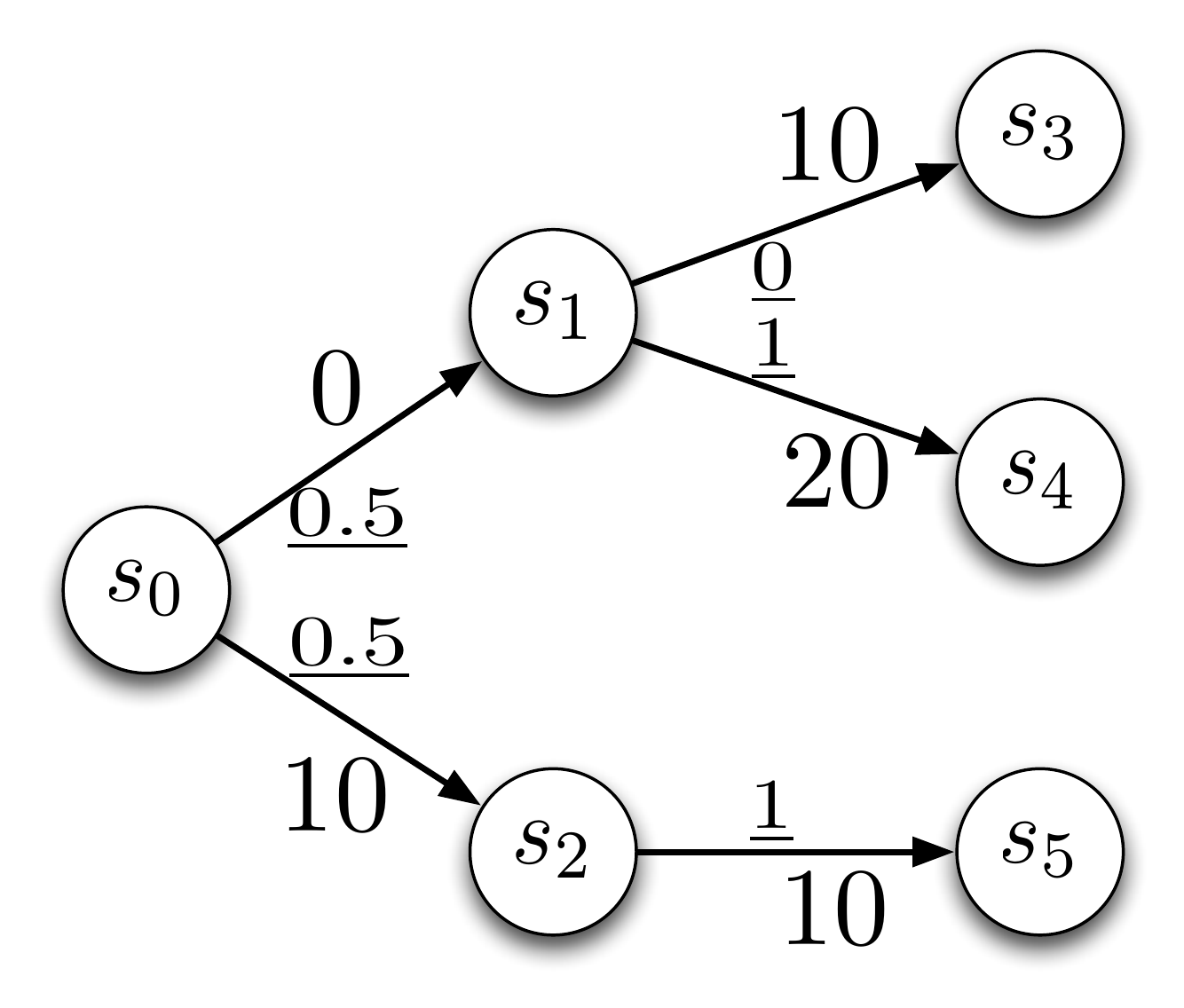}
  }
  \caption{Limitations of mean-variance optimization. Underlined numbers along the edges represent transition probabilities; non-underlined numbers represent stage-wise constraint and objective function costs (that are equal for this example). Terminal constraint costs are zero. Under policy $\pi_1$, the costs per stage are given by $d(s_0, u_0)= 0.5\cdot 0 + 0.5\cdot10 = 5$,  $d(s_1, u_1)  =10$, and $d(s_2, u_1)  =10$; under policy $\pi_2$, the costs per stage are given by $d(s_0, u_0)= 5$,  $d(s_1, u_1)  =20$, and $d(s_2, u_1)  =10$. One can verify that for policy $\pi_1$ one has  $\mathrm{var} \Bigl(\sum_{k=0}^{N -1}\, d(x_k, u_k) + d(x_N)\Bigr) = 25$, while for policy $\pi_2$ one has $\mathrm{var}\Bigl(\sum_{k=0}^{N-1} \, d(x_k, u_k) + d(x_N)\Bigr) = 0$. Then, if the risk threshold is less than $25$, the decision-maker would choose policy $\pi_2$ and would seek to incur losses in order to keep the variance small enough.}
  \label{fig:vcp}
\end{figure}

As a second example, we consider MDPs with average value at risk (AVaR) constraints, which are closely related to chance (i.e., probability) constraints and are enjoying a growing popularity, especially in the finance industry \cite{borkar2010risk}, due to favorable computational aspects, such as convexity. The average value at risk for a random variable $X$ at confidence level $\alpha$ is defined as \cite{Rockafellar_Uryasev_02}:
\[
\mathrm{AVaR}_{\alpha}(X):= \frac{1}{1 - \alpha}\, \int_{\alpha}^{1} \, \mathrm{VaR}_{\tau}(X) \, d\tau,
\]
where $\mathrm{VaR}_{\alpha}(X)$ is simply the $\alpha$-quantile of random variable $X$, i.e.,
\[
\mathrm{VaR}_{\alpha}(X) = \min \{x | \, \prob{X\leq x}\geq \alpha \}.
\]
Intuitively, the $\mathrm{AVaR}_{\alpha}$ is the expectation of $X$ in the conditional distribution of its upper $\alpha$-tail. For this reason, it can be interpreted as a metric of ``how bad is bad." The risk metric $\mathrm{AVaR}_{\alpha}$ is closely related to chance constraints, since the constraint $\mathrm{VaR}_{\alpha}(X)\leq 0$ corresponds to the chance constraint $\prob{X\leq 0}\geq \alpha$ \cite{Rockafellar_Uryasev_02}.
\begin{quote} {\bf Example 2: AVaR-constrained planning} --- Given an MDP with initial state $x_0\in S$ and time horizon $N\geq 1$, solve
\begin{alignat*}{2}
\min_{\pi} & & \quad&\expectation{\sum_{k=0}^{N-1}\, c(x_k, u_k) + c_N(x_N)} \\
\text{subject to} & & \quad&\mathrm{AVaR}_{\alpha}\biggl(\, \sum_{k=0}^{N-1}\, d(x_k, u_k) + d_N(x_N) \, \biggr) \leq r_0,
\end{alignat*}
where $r_0 \in \reals$ is a user-provided risk threshold.
\end{quote}

Let us interpret the constraint costs $d$ as acceptable if negative and unacceptable otherwise. Accordingly consider the example in Figure \ref{fig:avar} (based upon \cite{artzner_delbaen_eber_heath_98}), with threshold $r_0= 0$ and confidence level $1/3$. One can show that the problem (consisting of a single policy) is infeasible, since at the first stage AVaR is positive. On the other hand, the constraint costs are acceptable in every state of the world from the perspective of the subsequent stage. In other words, the decision-maker would deem infeasible a problem that, at the second stage, appears feasible under any possible realization of the uncertainties.

\begin{figure}[htpb]
\begin{center}
\includegraphics[width=0.45\textwidth]{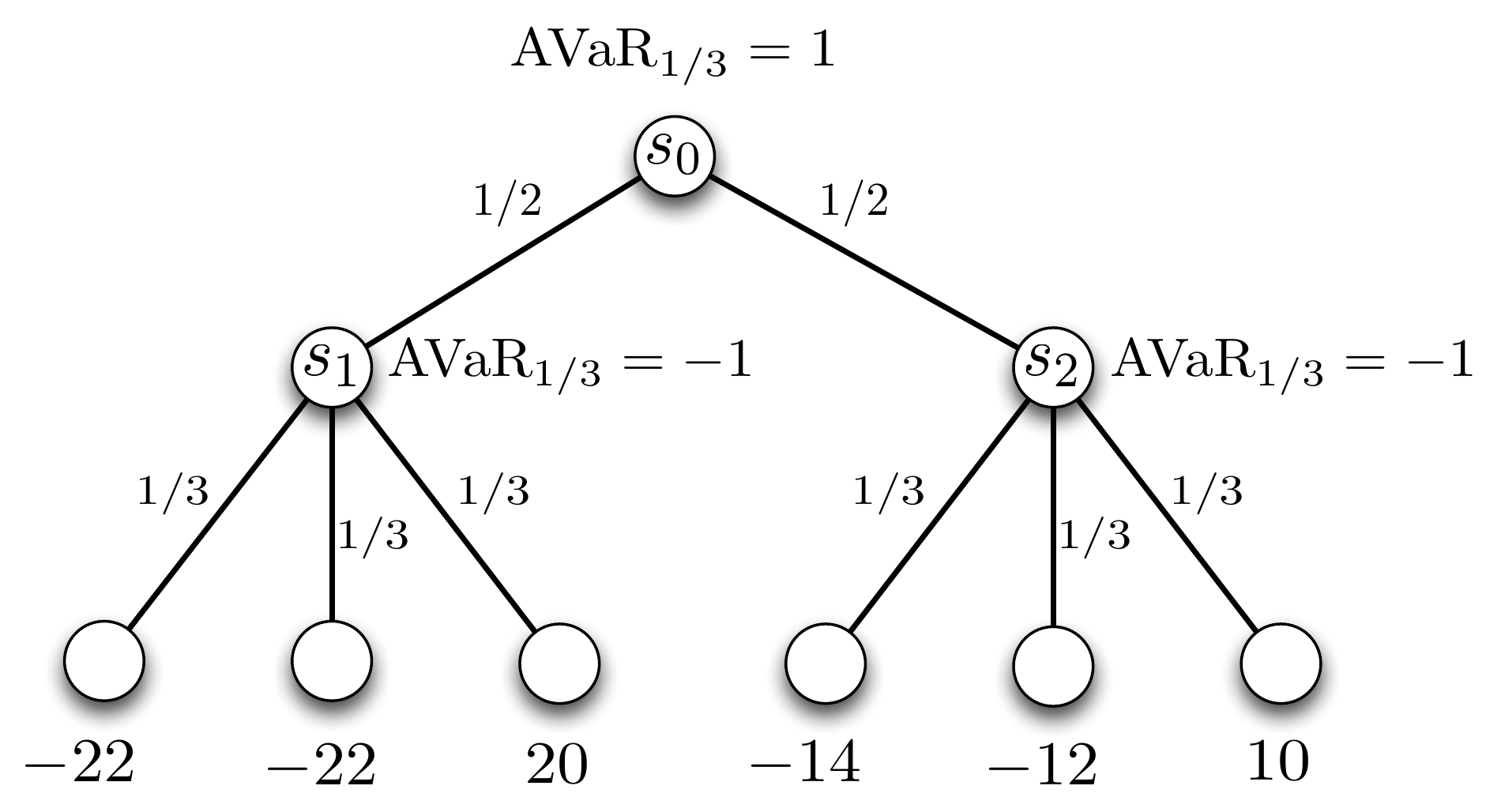}
\end{center}
\caption{Limitations of AVaR-constrained optimization. The numbers along the edges represent transition probabilities, while the numbers below the terminal nodes represent the terminal constraint costs (the other constraint costs are zero). The problem involves a single control policy (hence there is a unique transition graph). The constraint cost appears acceptable in states $s_1$ and $s_2$, but unacceptable from the perspective of the first stage in state $s_0$.}
\label{fig:avar}
\end{figure}

It is important to note that there is nothing special about these examples, which indeed capture a range of widely accepted criteria to pose risk constraints. Similar paradoxical results could be obtained with other risk metrics. Henceforth, we will collectively refer to the aforementioned irrational behaviors as ``time-inconsistent" policies, since they reflect an inconsistent risk assessment over time.
\subsection{Time Consistency }
From Theorem \ref{TC_good} and Theorem \ref{them:optPoli}  we can find a sequence of history dependent optimal control policies by Bellman iteration. In this section, we want to show that the risk constrained SOC in Problem $\mathcal{OPT}$ is time consistent. Most analysis in literature (c.f. Chapter~1 of \cite{bertsekas_05, chong2012bellman} for more details), restricts the analysis of time consistency to  problems with Markovian policies.  Since Problem $\mathcal{OPT}$ solves for a sequence of history dependent policy, it is unclear how we can analyze time consistency directly by truncating the sub-histories. For this purpose, we define the space of augmented state feedback policies and the space of risk-to-go updates:
\begin{alignat*}{1}
\Pi^{m}_k &:= \left \{ \{\pi_k,\ldots,\pi_{N-1}\}: 
\begin{array}{l}
\pi_j:S\times\Phi_j(S) \rightarrow U(S),\\
  j\in\{ k,\ldots,N-1\} \end{array}\right\} \\
\mathfrak{R}_k &\!\!:=\! \!\left \{ \{\mathcal{R}_{k+1},\ldots,\mathcal{R}_{N}\}\!:\!\!\!\!
\begin{array}{l}
 \mathcal{R}_{j+1}(x_{j},r_{j})(x_{j+1})=r_{j+1}-r_j,\\
 r_{j+1}\in\Phi_{j+1}(x_{j+1}),\\
  j\in\{ k,\ldots,N-1\} \end{array}
\!\!\!\!\right\}
\end{alignat*}
and the following optimization problem:
\begin{quote} {\bf Optimization problem $\mathcal{OPT}^\textrm{M}$} --- Given an initial state $x_0\in S$, a time horizon $N\geq 1$, and a risk threshold $r_0 \in \reals$, solve
\begin{alignat*}{2}
\min_{\pi \in \Pi^{m}_0, \mathcal{R}\in\mathfrak{R}_0} & & \quad&J^{\pi}_N(x_0)\\
\text{subject to} & & \quad&\risk_{j}(r_{j+1}+d_j(x_j,\pi_j(x_j,r_j)))=r_j,\\
& &\quad& r_{j+1}=\mathcal{R}_{j+1}(x_{j},r_{j})(x_{j+1})+r_j\\
& &\quad & 0\leq r_N, \quad j\in\{0,\ldots,N-1\}.
\end{alignat*}
\end{quote}

The $k-$th subproblem of Problem $\mathcal{OPT}^\textrm{M}$ is simply defined by replacing $x_0\in S$ and $r_0\in\Phi_0(x_0)$ by $x_k\in S$ and $r_k\in\Phi_k(x_k)$, i.e.,
\begin{itemize}
\item If $k<N$ and $r_k \in \Phi_k(x_k)$:
\begin{alignat*}{2}
V^M_k(x_k, r_k)  =& & \quad&\\
 \min_{\pi \in \Pi^m_k,\mathcal{R}\in\mathfrak{R}_k}& & \quad&J^{\pi}_N(x_k) \\  
\text{subject to} & & \quad&\risk_{j}(r_{j+1}+d_j(x_j,\pi_j(x_j,r_j)))=0,\\
& &\quad& r_{j+1}=\mathcal{R}_{j+1}(x_{j},r_{j})(x_{j+1})+r_j\\
& &\quad & 0\leq r_N, \quad j\in\{k,\ldots,N-1\}.
\end{alignat*}
\item If $k\leq N$ and $r_k \notin \Phi_k(x_k)$:
\[
V^M_k(x_k, r_k)  = \infty.
\]
\item When $k=N$ and $r_N\in\Phi_N(x_N)=[0,\infty]$:
\[
V^M_N(x_N,r_N) = 0.
\]
\end{itemize}
We are now in the position of defining the notion of time consistency for Problem $\mathcal{OPT}^M$.

\begin{definition}(Time Consistency of risk constrained SOC)\label{TC_defn}
For any given initial state $x_0\in S$, and risk threshold $r_{0}\in\Phi_0(x_0)$, define $\pi^\ast=\{\pi_0^\ast,\ldots,\pi_{N-1}^\ast\}\in\Pi^M_0$ as a sequence of optimal policy and $r^\ast=\{r_1^\ast,\ldots,r_{N}^\ast\}$ as a sequence of risk-to-go for Problem $\mathcal{OPT}^M$.  Problem $\mathcal{OPT}^M$ is a time consistent SOC problem, if at any $k\in\{0,\ldots,N-1\}$, the k-subsequence of $\pi^\ast$: $\pi^\ast_{k,N-1}:=\{\pi_k^\ast,\ldots,\pi_{N-1}^\ast\}$ and $r^\ast$: $r^\ast_{k+1,N}:=\{r_{k+1}^\ast,\ldots,r_{N}^\ast\}$ are sequences of optimal solution to the k-tail subproblem of Problem $\mathcal{OPT}^M$ at initial state $x_k\in S$, and risk threshold $r_{k}\in\Phi_k(x_k)$. In this case, $\pi^\ast$ is a time consistent optimal policy, and $\mathcal R^\ast$ is a time consistent risk-to-go.
\end{definition}

Before getting into the main result, we want to justify the equivalence between Problem $\mathcal{OPT}^M$ and $\mathcal{OPT}$. First, we have the following technical lemma, showing that without loss of optimality, the inequality constraint in $F(x_k,r_k)$ can be replaced by an equality.
\begin{lemma}\label{tech_lem_F}
For any value function $V_{k+1}:S\times \Phi_{k+1}(S)\rightarrow\reals$, $\forall k\in\{0,\ldots,N-1\}$ for Problem $\mathcal{OPT}$, the following equality holds:
\begin{equation}\label{eq:T_L}
V_k(x_k,r_k)=T_k[V_{k+1}](x_k,r_k)=\overline{T}_k[V_{k+1}](x_k,r_k),\,\forall x_k,\, r_k.
\end{equation}
where
\begin{equation*}
\begin{split}
\overline{T}_k[V]&(x_k, r_k) := \inf_{(u,r^{\prime})\in \overline{F}_k(x_k, r_k)} \, \biggl\{c(x_k,u) \, \,+\\
& \ \sum_{x_{k+1}  \in S} \, Q(x_{k+1}|x_k,u)\, V(x_{k+1}, r^{\prime}(x_{k+1})) \biggr\},
\end{split}
\end{equation*}
and $\overline{F}_k$ is the set of control/threshold \emph{functions}:
\begin{equation*}
\begin{split}
&\overline{F}_k(x_k, r_k):= \biggr\{(u, r^{\prime}) \Big | u\in U(x_k), r^{\prime}(x^{\prime}) \in \Phi_{k+1}(x^{\prime}) \text{ for}\\
& \text{all } x^{\prime} \in S, r^\prime(x^\prime)=r_k+L(x^\prime),\,\, \text{and }\,\,\risk_k(L)+d(x_k,u)=0\biggl\}.
\end{split}
\end{equation*}
\end{lemma}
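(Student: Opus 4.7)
The plan is to show $T_k[V_{k+1}] = \overline{T}_k[V_{k+1}]$ by establishing each inequality separately and then invoking Theorem \ref{TC_good} for the first equality in \eqref{eq:T_L}.

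First I would observe the easy direction $T_k[V_{k+1}](x_k,r_k) \leq \overline{T}_k[V_{k+1}](x_k,r_k)$. Indeed, any $(u,r') \in \overline{F}_k(x_k,r_k)$ satisfies $r'(x')=r_k+L(x')$ with $\risk_k(L)+d(x_k,u)=0$; by translation invariance of $\risk_k$ (noting that $r_k$ is deterministic at stage $k$), this gives $\risk_k(r'(x_{k+1}))=r_k+\risk_k(L)$, so $d(x_k,u)+\risk_k(r'(x_{k+1}))=r_k$, which is precisely the boundary of the inequality constraint in $F_k(x_k,r_k)$. Hence $\overline{F}_k(x_k,r_k) \subseteq F_k(x_k,r_k)$ and infimizing over a smaller set can only be larger.

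The reverse direction $\overline{T}_k[V_{k+1}] \leq T_k[V_{k+1}]$ is the substantive part. Before proving it, I would first establish an auxiliary monotonicity property: for each $x_{k+1}\in S$, the map $r \mapsto V_{k+1}(x_{k+1},r)$ is non-increasing on $\Phi_{k+1}(x_{k+1})$. This is intuitive because a larger risk budget yields a weaker constraint and hence a (weakly) smaller optimum; formally it follows from the definition of the value function, since a feasible policy for threshold $r$ remains feasible for any $r'\geq r$. Given any $(u,r') \in F_k(x_k,r_k)$, set $\delta := r_k - d(x_k,u) - \risk_k(r'(x_{k+1})) \geq 0$ and define $L(x') := r'(x') - r_k$ and then the shifted threshold $\tilde{r}'(x') := r'(x') + \delta = r_k + (L(x')+\delta)$. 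Using translation invariance, $\risk_k(L+\delta) + d(x_k,u) = \risk_k(L) + \delta + d(x_k,u) = 0$, so $(u,\tilde{r}') \in \overline{F}_k(x_k,r_k)$; moreover $\tilde{r}'(x') \geq r'(x') \geq \underline{R}_N(x')$ so $\tilde{r}'(x') \in \Phi_{k+1}(x')$. By the monotonicity observation, $V_{k+1}(x_{k+1},\tilde{r}'(x_{k+1})) \leq V_{k+1}(x_{k+1},r'(x_{k+1}))$, so the cost attained under $(u,\tilde{r}')$ in $\overline{T}_k[V_{k+1}]$ is no larger than the cost attained under $(u,r')$ in $T_k[V_{k+1}]$. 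Taking infimum over $F_k(x_k,r_k)$ yields the desired inequality.

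The main technical obstacle is justifying the monotonicity of $V_{k+1}(x_{k+1},\cdot)$ on $\Phi_{k+1}(x_{k+1})$; this requires a careful reading of the definition of $V_{k+1}$ in terms of the tail-subproblem \eqref{problem_SOCP}--\eqref{constraint_SOCP}, where it is evident that enlarging $r_{k+1}$ enlarges the feasible truncated-policy set $\{\pi\in\Pi_{k+1} : R_N^\pi(x_{k+1})\leq r_{k+1}\}$. Once that monotonicity is in hand, translation invariance of $\risk_k$ does all the work. Combining the two inequalities gives $T_k[V_{k+1}] = \overline{T}_k[V_{k+1}]$, and Theorem \ref{TC_good} supplies $V_k(x_k,r_k) = T_k[V_{k+1}](x_k,r_k)$, completing the proof of \eqref{eq:T_L}.
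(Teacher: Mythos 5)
Your proof is correct and follows the same overall skeleton as the paper's: one direction via the inclusion $\overline{F}_k(x_k,r_k)\subseteq F_k(x_k,r_k)$, the other via monotonicity of $r\mapsto V_{k+1}(x_{k+1},r)$ combined with an upward shift of the threshold function to turn the slack inequality into an equality. Where you differ is in how the shift is built, and your version is cleaner. The paper fixes the (assumed-attained) minimizer $(u^\ast, r^{\prime,\ast})$ of \eqref{eq:T}, invokes Lipschitz continuity of $\risk_k$, and asserts the existence of a nonnegative random perturbation $\epsilon(x_{k+1})$ achieving $d(x_k,u^\ast)+\risk_k(r^{\prime,\ast}(x_{k+1})+\epsilon(x_{k+1}))=r_k$ --- an intermediate-value-type existence claim that is left somewhat informal. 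You instead shift by the deterministic constant $\delta = r_k - d(x_k,u) - \risk_k(r^{\prime}(x_{k+1}))\geq 0$, for which translation invariance gives $\risk_k(L+\delta)=\risk_k(L)+\delta$ \emph{exactly}, so membership in $\overline{F}_k(x_k,r_k)$ is verified by direct computation rather than by an existence argument; you also work with an arbitrary feasible pair and pass to the infimum, avoiding the paper's implicit assumption that the infimum in \eqref{eq:T} is attained. The one step you correctly flag as needing care --- monotonicity of $V_{k+1}(x_{k+1},\cdot)$ --- is handled the same way in the paper (enlarging $r_{k+1}$ enlarges the feasible policy set in \eqref{problem_SOCP}--\eqref{constraint_SOCP}), and your justification of it is adequate. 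In short: same decomposition, but your constant-shift argument buys exactness and dispenses with the Lipschitz hypothesis the paper leans on.
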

\begin{proof}
First it is obvious that $\overline{F}_k(x_k, r_k)\subseteq {F}_k(x_k, r_k)$ and $T_k[V_{k+1}](x_k,r_k)\leq \overline{T}_k[V_{k+1}](x_k,r_k)$. Now, suppose $F_k(x_k,r_k)\neq\emptyset$ and there exists an risk-to-go $r^{\prime,\ast}(x_{k+1})$ such that 
\[
d(x_k, u^\ast) + \risk_k(r^{\prime,\ast}(x_{k+1})) < r_k
\]
where $u^\ast$ is the optimal control input solved from the Bellman's equation in (\ref{eq:T}). Recall the definition of the value function of Problem $\mathcal{OPT}$, $V_{k+1}(x_{k+1}, r_{k+1})=\min\{ J^{\pi}_N(x_{k+1}):\,\,\pi \in \Pi_{k+1},\,\,R^{\pi}_N(x_{k+1}) \leq r_{k+1}\}$ when $r_{k+1} \in \Phi_{k+1}(x_{k+1})$. It can be easily seen that $V_{k+1}(x_{k+1}, r_{k+1})$ is a non-increasing function with respect to $r_{k+1}$. Furthermore, since $\risk_k$ is Lipschitz and $r^{\prime,\ast}(x_{k+1})$ is a discrete state, continuous magnitude random variable, we can always find a nonnegative discrete state, continuous magnitude, bounded random variable $\epsilon(x_{k+1})$ such that
\[
d(x_k, u^\ast) + \risk_k(r^{\prime,\ast}(x_{k+1})+\epsilon(x_{k+1})) = r_k.
\]
Furthermore, we know that $r^{\prime,\ast}(x_{k+1})+\epsilon(x_{k+1})\geq r^{\prime,\ast}(x_{k+1})$ surely, which implies $r^{\prime,\ast}(x_{k+1})+\epsilon(x_{k+1})\geq \underline{R}_N(x_{k+1})$. On the other hand, since $r_k$, $\epsilon(x_{k+1})$ and $r^{\prime,\ast}(x_{k+1})$ are all bounded quantities, this implies $r^{\prime,\ast}(x_{k+1})+\epsilon(x_{k+1})<\infty$ surely. By writing $\tilde{r}^{\prime,\ast}(x_{k+1})=r^{\prime,\ast}(x_{k+1})+\epsilon(x_{k+1})\geq r^{\prime,\ast}(x_{k+1})$, one obtains $V_{k+1}(x_{k+1}, r^{\prime,\ast}(x_{k+1}))\geq V_{k+1}(x_{k+1}, \tilde{r}^{\prime,\ast}(x_{k+1}))$ surely. Thus, we can use $(u^\ast, \tilde{r}^{\prime,\ast})\in\overline{F}_k(x_k, r_k)$ as a minimizer for the Bellman's equation in (\ref{eq:T}). To summarize, whenever $F_k(x_k,r_k)\neq\emptyset$, there always exists an optimal control input $u^\ast$ and  risk-to-go $r^{\prime,\ast}(x_{k+1})$ for Bellman's equation in (\ref{eq:T}) such that $d(x_k, u^\ast) + \risk_k(r^{\prime,\ast}(x_{k+1})) = r_k$. By letting $L(x^\prime)=r^{\prime,\ast}(x^\prime)-r_k$, $\forall x^\prime\in S$, we have just showed that there is no loss of optimality to consider the operator $\overline{T}_k[V_{k+1}](x_k,r_k)$ instead of the Bellman's equation in (\ref{eq:T}).
\end{proof}

Essentially, we have the following result. The proof of this theorem is analogous to the proof of Theorem \ref{TC_good} and is omitted here for the interest of brevity. Details of the proof can be found in the Appendix.
\begin{theorem}\label{Bellman_M_thm}
The value function of Problem $\mathcal{OPT}^M$ is identical to the value function of Problem $\mathcal{OPT}$. Furthermore, Problem $\mathcal{OPT}^M$ is a time consistent SOC problem. 
\end{theorem}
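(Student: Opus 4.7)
The plan is to prove both claims jointly by showing that $V^M_k$ satisfies exactly the same dynamic programming recursion as $V_k$, namely the equality-constrained operator $\overline{T}_k$ from Lemma \ref{tech_lem_F}. Since Lemma \ref{tech_lem_F} already gives $V_k = T_k[V_{k+1}] = \overline{T}_k[V_{k+1}]$, it suffices to show that $V^M_k$ also fixed-points $\overline{T}_k$ with the same boundary condition $V^M_N(x_N,r_N) = 0$ on $\Phi_N(x_N)$, and infeasibility producing $\infty$ on both sides. A straightforward backward induction then yields $V^M_k = V_k$ on the entire augmented state space $S \times \Phi_k(S)$.

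To carry this out, I would unroll the definition of $V^M_k(x_k, r_k)$: at stage $k$ the decision maker chooses $u_k = \pi_k(x_k, r_k)$ together with the update $r_{k+1} = \mathcal{R}_{k+1}(x_k, r_k)(x_{k+1}) + r_k$ and is required to satisfy $\risk_k(r_{k+1} + d(x_k, u_k)) = r_k$ with $r_{k+1} \in \Phi_{k+1}(x_{k+1})$. Using translation invariance of $\risk_k$ (recall $d(x_k, u_k)$ is deterministic given $\fil_k$), this rewrites as $d(x_k, u_k) + \risk_k(r_{k+1}) = r_k$, which is precisely the defining relation for $\overline{F}_k(x_k, r_k)$ after setting $L(x') := r_{k+1}(x') - r_k$. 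Splitting off the first-stage cost $c(x_k, u_k)$ and applying the tower property to the expectation defining $J^\pi_N(x_k)$, the residual tail is the optimization defining $V^M_{k+1}(x_{k+1}, r_{k+1})$; hence
\[
V^M_k(x_k, r_k) \;=\; \overline{T}_k[V^M_{k+1}](x_k, r_k).
\]
Induction from $V^M_N = V_N$ together with Lemma \ref{tech_lem_F} then gives $V^M_k \equiv V_k$ for all $k$.

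For time consistency, I would exploit the Markovian structure on the augmented state $(x, r)$. Let $u^*(x_k, r_k)$ and $r'^*(x_k, r_k)(\cdot)$ denote the minimizers of $\overline{T}_k[V_{k+1}]$, which exist whenever $r_k \in \Phi_k(x_k)$. Define $\pi^*_k(x_k, r_k) := u^*(x_k, r_k)$ and $\mathcal{R}^*_{k+1}(x_k, r_k)(x_{k+1}) := r'^*(x_k, r_k)(x_{k+1}) - r_k$, and let $\{r^*_k\}$ be the induced threshold trajectory starting from $r_0$. By the Bellman equation just established, $(\pi^*, \mathcal{R}^*)$ attains $V^M_0(x_0, r_0)$. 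Moreover, because the recursion $\overline{T}_k$ depends only on $(x_k, r_k)$ and not on the past, the tail $\{\pi^*_j, \mathcal{R}^*_{j+1}\}_{j=k}^{N-1}$ is, by construction, a minimizer of the same Bellman recursion initialized at $(x_k, r^*_k)$ -- i.e., the $k$-tail subproblem of $\mathcal{OPT}^M$. This matches Definition \ref{TC_defn} and yields time consistency.

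The main technical obstacle is already disposed of by Lemma \ref{tech_lem_F}: converting the inequality constraint in $\mathcal{OPT}$ into the equality constraint used by $\mathcal{OPT}^M$ requires the monotonicity of $V_{k+1}(x_{k+1}, \cdot)$ in the threshold argument and the Lipschitz continuity of $\risk_k$, which together allow any slack to be absorbed via a bounded, nonnegative perturbation $\epsilon(x_{k+1})$ without increasing the objective. With that reduction in hand, the remaining work is a routine backward induction plus the observation that, after augmenting the state with the running risk-to-go, the problem becomes genuinely Markovian, so that Bellman's principle immediately delivers time consistency of the induced augmented-state feedback policy.
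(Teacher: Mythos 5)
Your proposal is correct and follows essentially the same route as the paper's own proof in the Appendix: both reduce the constraint $\risk_k(r_{k+1}+d(x_k,u_k))=r_k$ to the equality form of $\overline{F}_k$ via translation invariance, establish $V^M_k=\overline{T}_k[V^M_{k+1}]$ by the standard first-stage/tail decomposition (the paper spells this out as a two-sided inequality with explicitly constructed policies, which is the careful version of your ``split off the first-stage cost'' step), invoke Lemma~\ref{tech_lem_F} and backward induction from $V^M_N=V_N$ to get $V^M_k\equiv V_k$, and obtain time consistency by noting that the tail of the Bellman-derived augmented-state policy solves the $k$-tail subproblem as required by Definition~\ref{TC_defn}.
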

 Assume the infimum in expression (\ref{eq:T_L}) is attained. Let $r_0^\ast=r_0$. Based on Definition \ref{TC_defn}, any optimal control polices and optimal  risk-to-go found from the sequence of Bellman's equation, $\forall k\in\{0,\ldots,N-1\}$: 
 \[
 \begin{split}
& (\pi_k^\ast(x_k,r^\ast_k) ,r_{k+1}^\ast)\in\text{argmin}_{(u,r^{\prime})\in \overline{F}_k(x_k, r^\ast_k)} \, \biggl\{\\
&c(x_k,u) \, \,+ \ \sum_{x_{k+1}  \in S} \, Q(x_{k+1}|x_k,u)\, V^M_{k+1}(x_{k+1}, r^{\prime}(x_{k+1})) \biggr\}
\end{split}
 \]
 are time consistent.  

  \begin{remark}
Notice that the risk-to-go satisfies the following equation:
\[
\risk_k(r^\ast_{k+1})+d_k(x_k,\pi_k^\ast(x_k,r^\ast_k))=r^\ast_k,\,\, \forall k\in \{0,\ldots, N-2\}.
\]
where $r_0^\ast=r_0$. Define
\[
\overline M_{k+1}=r^\ast_{k+1}+\sum_{j=0}^{k}d_j(x_j,\pi_j^\ast(x_j,r^\ast_j))
\]
We can show that $M_{N-1}$ satisfies the following risk sensitive Martingale property: $\risk_{k}(M_{k+1})=M_{k}$. 
\end{remark}

Next, we have the following corollary depicting the closed form solution policy of Problem $\mathcal{OPT}$. The proof is identical to the proof of Theorem \ref{them:optPoli} and will be omitted for brevity.
 \begin{corollary}\label{coro_TC_OPT}
Let $\pi \in \Pi$ be a policy recursively defined by the solution of Bellman's equation in (\ref{eq:T_L}):
\[
\pi_k(h_k) = u^*(x_k, r^\ast_k), \text{ with } r^\ast_k = r^\ast_{k-1}+L^\ast(x_{k-1},r^\ast_{k-1})(x_k),
\]
when $k\in \{1,\ldots, N-1\}$, and
\[
\pi_0(x_0) = u^*(x_0, r^\ast_0),
\]
where $r^\ast_0=r_0\in \Phi_0(x_0)$. Then, $\pi=\{\pi_0,\ldots,\pi_{N-1}\}$ is an optimal policy for Problem $\mathcal{OPT}$ with initial state $x_0$ and threshold $r_0$. Furthermore, the $k-$th subsequence of $\pi$, i.e., $\{\pi_k,\ldots,\pi_{N-1}\}$ is also an optimal history dependent policy to the tail subproblem of Problem $\mathcal{OPT}$.
 \end{corollary}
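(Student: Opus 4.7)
My plan is to mirror the proof of Theorem \ref{them:optPoli}, with two adaptations: I would use the equality form of the risk constraint furnished by Lemma \ref{tech_lem_F} so that the risk-to-go $r_k^*$ is built by the telescoping update $r_k^* = r_{k-1}^* + L^*(x_{k-1}, r_{k-1}^*)(x_k)$, and I would invoke Theorem \ref{Bellman_M_thm} to identify $V_k = V_k^M$ and thereby import time consistency for free from Problem $\mathcal{OPT}^M$.

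The first step is feasibility. Since $(u^*(x_{k-1}, r_{k-1}^*), L^*(x_{k-1}, r_{k-1}^*)) \in \overline{F}_{k-1}(x_{k-1}, r_{k-1}^*)$, translation invariance of $\risk_{k-1}$ gives $d(x_{k-1}, u_{k-1}^*) + \risk_{k-1}(r_k^*) = r_{k-1}^*$ along every sample path. Defining $Y_N = 0$ and $Y_k = d(x_k, u_k^*) + \risk_k(Y_{k+1})$, Theorem \ref{thrm:tcc} identifies $R_N^\pi(x_0) = Y_0$. Since $r_N^* \in \Phi_N(x_N) = [0,\infty)$ surely, one has $r_N^* \geq Y_N$, and monotonicity of each $\risk_k$ combined with the telescoping equality propagates $r_k^* \geq Y_k$ back to $k = 0$, yielding $R_N^\pi(x_0) \leq r_0$. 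The containment $r_k^* \in \Phi_k(x_k)$ needed to keep $\overline{F}_k$ non-empty along the trajectory is already built into the definition of $\overline{F}_{k-1}$.

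The second step is optimality. A backward induction on $k$, with base case $V_N(x_N, r_N^*) = 0$, shows that $V_k(x_k, r_k^*) = J_N^\pi(x_k)$. The induction step uses that $(u^*(x_k, r_k^*), L^*(x_k, r_k^*))$ is a minimizer in \eqref{eq:T_L}, giving
\[
V_k(x_k, r_k^*) = c(x_k, u_k^*) + \sum_{x_{k+1} \in S} Q(x_{k+1} \mid x_k, u_k^*) \, V_{k+1}(x_{k+1}, r_{k+1}^*),
\]
and invoking the inductive hypothesis on $V_{k+1}$. Combined with feasibility, $\pi$ attains $V_0(x_0, r_0)$ and is therefore optimal for $\mathcal{OPT}$. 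Time consistency of the $k$-th subsequence is then immediate: restarting the same recipe at stage $k$ with initial state $x_k$ and threshold $r_k^*$ reproduces $\{\pi_k, \ldots, \pi_{N-1}\}$ together with $\{r_{k+1}^*, \ldots, r_N^*\}$, so the feasibility and optimality arguments applied to the tail subproblem yield optimality of the tail policy with threshold $r_k^*$.

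The main obstacle I anticipate is the feasibility bookkeeping: ensuring along every realization that $r_k^* \in \Phi_k(x_k)$ so that $\overline{F}_k(x_k, r_k^*)$ remains non-empty and the recursion does not terminate prematurely. The equality form in Lemma \ref{tech_lem_F} is what keeps the argument tight; a strict inequality update would leave slack that need not propagate cleanly, whereas the equality $d(x_{k-1}, u_{k-1}^*) + \risk_{k-1}(r_k^*) = r_{k-1}^*$ gives the clean identification that drives the monotonicity-based telescoping and ties $r_k^*$ directly to the tail risk at stage $k$.
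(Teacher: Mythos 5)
Your proposal is correct and follows essentially the route the paper intends: the paper omits this proof, stating it is identical to that of Theorem \ref{them:optPoli}, and your argument is exactly that construction (feasibility via translation invariance, monotonicity, and the telescoping equality $d(x_{k-1},u_{k-1}^*)+\risk_{k-1}(r_k^*)=r_{k-1}^*$ from $\overline{F}_{k-1}$, followed by backward induction showing $J_N^{\pi}(x_k)=V_k(x_k,r_k^*)$) adapted to the equality-constrained operator $\overline{T}_k$ of Lemma \ref{tech_lem_F}. The feasibility bookkeeping you flag is handled correctly, since $r_k^*\in\Phi_k(x_k)$ is enforced by the definition of $\overline{F}_{k-1}$.
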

 Similar to problem $\mathcal{OPT}$, this corollary concludes that the optimal policy of problem $\mathcal{OPT}^M$ can also be constructed using both state update and the risk-to-go. The resultant policy is thus history dependent. However it is still unclear how one can obtain the analytical formula of the risk-to-go from merely solving the Bellman iteration in \eqref{eq:T_L}. 
  
\subsection{Analytical Formula for the Risk-to-go Update}
Based on the time consistency analysis in previous sections, we aim to derive the analytical update formula for the risk-to-go $r_k$. Before getting to the main result, we need the following technical lemmas.
\begin{lemma}\label{lem_phi_k}
Suppose $r_{k}=R_N^{\pi}(x_{k})-R_N^{\pi}(x_{k-1})+r_{k-1}$ for $k\in\{1,\ldots,N-1\}$ and $\pi\in\Pi$ is any admissible history dependent control policy. Then, if $r_0\in\Phi_0(x_0)$ such that $R_N^\pi(x_0)\leq r_0$, it implies $r_{k}\in\Phi_{k}(x_{k})$ for all $k\in\{0,\ldots,N-1\}$.
\end{lemma}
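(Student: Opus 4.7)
The plan is to proceed by a direct telescoping argument, reducing the claim to the base-case inequality $r_0 \geq R_N^\pi(x_0)$ together with the definition of $\Phi_k(x_k) = [\underline{R}_N(x_k), \infty)$.

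First I would iterate the hypothesized recursion $r_k - r_{k-1} = R_N^\pi(x_k) - R_N^\pi(x_{k-1})$ to get a telescoped identity
\begin{equation*}
r_k = r_0 + \bigl(R_N^\pi(x_k) - R_N^\pi(x_0)\bigr), \qquad k \in \{0,\ldots,N-1\}.
\end{equation*}
Here one has to be a bit careful that $R_N^\pi(x_k)$ is well defined when $\pi \in \Pi$ is history dependent: given a sample history $h_{0,k}$, the truncation of $\pi$ to stages $k,\ldots,N-1$ is itself a deterministic element of $\Pi_k$ (its decisions depend only on the sub-history starting from $x_k$, since everything prior has been fixed), so the risk $R_N^\pi(x_k) = \risk_{k,N}(d(x_k,u_k),\ldots,d(x_{N-1},u_{N-1}),0)$ is a well-defined element of $\cs_k$.

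Next I would use the hypothesis $R_N^\pi(x_0) \leq r_0$ to conclude $r_0 - R_N^\pi(x_0) \geq 0$, whence from the telescoped identity
\begin{equation*}
r_k \;\geq\; R_N^\pi(x_k) \;\geq\; \min_{\pi' \in \Pi_k} R_N^{\pi'}(x_k) \;=\; \underline{R}_N(x_k),
\end{equation*}
where the second inequality uses that the truncation of $\pi$ belongs to $\Pi_k$ (so it competes in the infimum defining $\underline{R}_N(x_k)$). By the definition $\Phi_k(x_k) = [\underline{R}_N(x_k), \infty)$ this gives $r_k \in \Phi_k(x_k)$, as desired.

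The argument is essentially a one-line telescoping followed by the minimization bound, so the only real subtlety — and the only place where I would pause — is confirming that the restriction of the history-dependent policy $\pi$ to stages $k,\ldots,N-1$ is an admissible element of $\Pi_k$ for the purpose of comparing $R_N^\pi(x_k)$ against $\underline{R}_N(x_k)$. Once that identification is made, the proof is immediate from the definitions of $\Phi_k$ and $\underline{R}_N$.
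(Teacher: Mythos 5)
Your argument is essentially the paper's own proof: the same telescoping of the recursion to $r_k - R_N^\pi(x_k) = r_0 - R_N^\pi(x_0) \geq 0$, followed by $R_N^\pi(x_k) \geq \underline{R}_N(x_k)$ since the truncation of $\pi$ competes in the minimization defining $\underline{R}_N(x_k)$. The only thing the paper adds is an explicit upper-bound computation showing $r_k < \infty$ (via stage-wise bounds $\risk_{\text{min}}, \risk_{\text{max}}$ on the risk-to-go and a telescopic sum), which in your telescoped identity is immediate from the finiteness of $r_0$ and of $R_N^\pi$ on a finite MDP, so your proof is complete as written.
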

\begin{proof}
First, we characterize the lower bound of $r_{k}$. By definition, $r_{j+1}-r_j=R_N^{\pi}(x_{j+1})-R_N^{\pi}(x_{j})$, for $j\in\{0,\ldots,N-2\}$. By summing over $j=0$ to $k-1$, for any fixed $k\in\{0,\ldots,N-1\}$, we get
\[
\begin{split}
&r_{k}-r_0=R_N^\pi(x_{k})-R_N^\pi(x_{0})\\
\implies &r_{k}-R_N^\pi(x_{k})=r_0-R_N^\pi(x_{0})
\end{split}
\]
Thus, one obtains 
\[
r_k-\underline{R}_N(x_k)\geq r_k-R_N^\pi(x_{k})=r_0-R_N^\pi(x_{0})\geq 0.
\]
The last inequality is due to the fact that $\pi\in\Pi$ is a feasible control policy. Next, we characterize the upper bound for $r_{k}$. For $r_k=R_N^\pi(x_{k})-R_N^\pi(x_{k-1})+r_{k-1}$, by monotonicity and translation invariance of multi period risk measures, one can easily show that for any $k\in\{0,\ldots,N-1\}$,
\[
R_N^\pi(x_{k})\leq (N-k)\risk_{\text{max}},\,\,R_N^\pi(x_{k})\geq (N-k)\risk_{\text{min}}.
\]

Therefore, the above expressions imply
 \[
 r_k-r_{k-1}\leq (N-k)(\risk_{\text{max}}-\risk_{\text{min}})-\risk_{\text{min}}.
 \]
 By a telescopic sum, and since $r_0\in[\underline{R}_N(x_0),\infty)$, one obtains
  \[
  \begin{split}
 r_k\leq &\,\sum_{j=1}^k(N-j)(\risk_{\text{max}}-\risk_{\text{min}})-k\risk_{\text{min}}+r_0\\
 =&\,r_0+k\left(\left(N-\frac{k+1}{2}\right)(\risk_{\text{max}}-\risk_{\text{min}})-\risk_{\text{min}}\right)<\infty
 \end{split}
 \]
Thus, combining the result of lower bound for $r_k$, we get, $r_k\in[\underline{R}_N(x_k),\infty)$, which means $r_k\in\Phi_k(x_k)$ for $k\in\{0,\ldots,N-1\}$. 
\end{proof}
We are now in the position of deriving the main result of this paper. The following theorem provides an analytical update formula for the risk-to-go $r_k$.
\begin{theorem}\label{thm_TC_update}(Conceptual Risk-to-go)
Let $\pi^\ast\in\Pi$ be an optimal policy for Problem $\mathcal{OPT}$.  The following risk-to-go
\begin{equation}\label{r_defn_2}
\begin{split}
&\tilde{r}_k=r_0,\\
&\tilde{r}_{k+1}=\tilde{r}_k+R_N^{\pi^\ast}(x_{k+1})\!-\!R_N^{\pi^\ast}(x_{k}),\,\, \forall k\in\{0,\ldots,N-1\},
\end{split}
\end{equation}
and the augmented state-feedback control policy $\tilde\pi=\{\tilde\pi_0,\ldots,\tilde\pi_{N-1}\}$, where $\tilde\pi(x_k,\tilde r_k)=\pi^\ast(x_k)$ form time consistent solution to Problem $\mathcal{OPT}^M$.
 \end{theorem}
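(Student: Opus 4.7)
The plan is to verify that the proposed pair $(\tilde{\pi},\tilde{r})$ is a feasible \emph{and} optimal solution of Problem $\mathcal{OPT}^M$, and then invoke Theorem~\ref{Bellman_M_thm} (time consistency of $\mathcal{OPT}^M$) to conclude that, in particular, this specific solution is time consistent. Since $\tilde{\pi}_k(x_k,\tilde{r}_k)=\pi^\ast(x_k)$ reproduces the state-action trajectory induced by $\pi^\ast$, the objective value coincides with $J^{\pi^\ast}_N(x_0)$, which is optimal for $\mathcal{OPT}$ and hence, by Theorem~\ref{Bellman_M_thm}, for $\mathcal{OPT}^M$. So the work reduces to feasibility.

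The main feasibility identity to establish is the equality constraint $\risk_k(\tilde{r}_{k+1}+d(x_k,\tilde{\pi}_k(x_k,\tilde{r}_k)))=\tilde{r}_k$. First I would use translation invariance of $\risk_k$ together with the definition $\tilde{r}_{k+1}=\tilde{r}_k+R_N^{\pi^\ast}(x_{k+1})-R_N^{\pi^\ast}(x_k)$ (in which $\tilde{r}_k$ and $R_N^{\pi^\ast}(x_k)$ are $\fil_k$-measurable) to peel off the deterministic terms:
\[
\risk_k(\tilde{r}_{k+1})=\tilde{r}_k-R_N^{\pi^\ast}(x_k)+\risk_k(R_N^{\pi^\ast}(x_{k+1})).
\]
Next I would apply the compositional structure of dynamic, time-consistent Markov risk measures (Theorem~\ref{thrm:tcc} together with Definition~\ref{def:Markov}), which yields the one-step recursion
\[
R_N^{\pi^\ast}(x_k)=d(x_k,\pi^\ast(x_k))+\risk_k(R_N^{\pi^\ast}(x_{k+1})).
\]
Substituting this back gives $\risk_k(\tilde{r}_{k+1})=\tilde{r}_k-d(x_k,\pi^\ast(x_k))$, which, together with translation invariance once more, delivers the required equality.

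The remaining feasibility conditions are handled as follows. Threshold admissibility $\tilde{r}_k\in\Phi_k(x_k)$ is exactly the content of Lemma~\ref{lem_phi_k} (its hypothesis $R_N^{\pi^\ast}(x_0)\leq r_0$ holds because $\pi^\ast$ is feasible for $\mathcal{OPT}$). The terminal condition $0\leq\tilde{r}_N$ follows from a telescoping argument: summing the defining recursion yields $\tilde{r}_N=r_0+R_N^{\pi^\ast}(x_N)-R_N^{\pi^\ast}(x_0)=r_0-R_N^{\pi^\ast}(x_0)\geq 0$, again by feasibility of $\pi^\ast$ and the fact that the risk starting from the terminal state is zero.

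Having established that $(\tilde{\pi},\tilde{r})$ is feasible and attains the optimal objective value of $\mathcal{OPT}^M$, I would close the argument by invoking Theorem~\ref{Bellman_M_thm}: since $\mathcal{OPT}^M$ is time consistent, every optimal solution (in particular ours) satisfies Definition~\ref{TC_defn}, so $(\tilde{\pi},\tilde{r})$ is a time consistent solution. The main obstacle is the feasibility step above; concretely, it hinges on correctly invoking translation invariance (which requires the deterministic, $\fil_k$-measurable terms to be on the \emph{outside} of the risk measure) and then matching the result against the one-step recursion coming from the Markov compositional form in \eqref{eq:tcrisk}–\eqref{eq:Markov}.
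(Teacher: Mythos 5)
Your proposal is correct and follows essentially the same route as the paper's proof: feasibility via Lemma~\ref{lem_phi_k}, the telescoping sum giving $\tilde{r}_N = r_0 - R_N^{\pi^\ast}(x_0) \geq 0$, and the one-step compositional recursion of the Markov risk measure combined with translation invariance to verify the equality constraint; optimality by matching $J_N^{\tilde\pi}(x_0)=J_N^{\pi^\ast}(x_0)$ against $V_0^M(x_0,r_0)=V_0(x_0,r_0)$; and time consistency from Definition~\ref{TC_defn}. Your explicit derivation of $\risk_k(\tilde{r}_{k+1}+d(x_k,\tilde\pi_k(x_k,\tilde r_k)))=\tilde{r}_k$ is in fact spelled out more carefully than in the paper, which states the same recursion but writes the objective cost $c$ where the constraint cost $d$ is intended.
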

\begin{proof} 
Since $R_N^{\pi^\ast}(x_{k})$ and $R_N^{\pi^\ast}(x_{k+1})$ are formed by compounding Markov risk measures, we can easily see from Definition \ref{def:Markov} that  $R_N^{\pi^\ast}(x_{k})$ and $R_N^{\pi^\ast}(x_{k+1})$ are functions of $x_k$ and $x_{k+1}$. Also, define 
\[
\tilde{L}_k(x_{k+1})=R_N^{\pi^\ast}(x_{k+1})\!-\!R_N^{\pi^\ast}(x_{k}),
\]
for any $k\in\{0,\ldots,N-1\}$. From Lemma \ref{lem_phi_k}, one obtains $\tilde{r}_{k+1}=\tilde{r}_k+\tilde{L}_k(x_{k+1})\in\Phi_{k+1}(x_{k+1})$, $\forall k\in\{0,\ldots,N-2\}$. Also, as there is no terminal cost, by a telescopic sum,
\[
\tilde{r}_N=\tilde{r}_0+\sum_{k=0}^{N-1}\tilde{L}_k(x_{k+1})=-R_N^{\pi^\ast}(x_{0})+\tilde{r}_0.
\]
As $\pi^\ast$ is an optimal history dependent control policy for Problem $\mathcal{OPT}$, one obtains $R_N^{\pi^\ast}(x_{0})\leq \tilde{r}_0$ and $\tilde{r}_N\geq 0$. Furthermore, by the time consistent property and translational invariance of risk measures, we have that
\[
\begin{split}
&\risk_k(R_N^{\pi^\ast}(x_{k+1}))+c(x_k,\pi^\ast_k(x_k))=R_N^{\pi^\ast}(x_{k}),\,\, \forall k,\\
\iff&\risk_k(R_N^{\pi^\ast}(x_{k+1}))+c(x_k,\tilde\pi_k(x_k,r_k))=R_N^{\pi^\ast}(x_{k}),\,\, \forall k,
\end{split}
\]
Thus, the optimal policy $\tilde\pi$ and the risk-to-go sequence $\tilde r=\{\tilde r_1,\ldots,\tilde{r}_{N}\}$ are feasible to Problem $\mathcal{OPT}^M$. Then, $\forall k\in\{0,\ldots,N-1\}$,
\[
V_0(x_0,r_0)=J_N^{\pi^\ast}(x_0)=J_N^{\tilde\pi}(x_0)\geq V_0^{M}(x_0,r_0).
\]
At the same time, equation (\ref{eq_V_V_M}) implies that $V_0^{M}(x_0,r_0)\geq V_0(x_0,r_0)$. Thus both arguments imply $\tilde\pi$ and $\tilde r=\{\tilde r_1,\ldots,\tilde{r}_{N}\}$ form solution to Problem $\mathcal{OPT}^M$.  Time consistency then follows directly from Definition \ref{TC_defn}.
\end{proof}
From this theorem, we conclude that the analytical formula of the risk-to-go can be written as a Martingale difference of the constraint cost function. This property is crucial to understand how risk evaluation is updated at each step in order to make time consistent decisions and to derive large scale risk constrained decision making algorithms.  

\section{The Squander or Save Example (c.f. \cite{haviv1996constrained})}\label{sec:example}
We consider the simple case of risk constrained SOC problem where risk-neutral costs and risk neutral constraints are considered.  Given an MDP with initial state $x_0\in S_0$, risk threshold $r_0\in\Phi_0(x_0)$ and time horizon $N\geq 1$, solve
\begin{alignat*}{2}
\min_{\pi} & & \quad& \mathbb E\left[\sum_{k=0}^{N-1}\, c(x_k, u_k)\right] \\
\text{subject to} & & \quad&\mathbb E\left[\sum_{k=0}^{N-1}\, d(x_k, u_k) \right] \leq r_0,
\end{alignat*}

Consider the example in Figure \ref{fig:sm_act_1} and Figure \ref{fig:sm_act_2} with $N=2$, and all terminal costs equal to zero (they are not drawn in the figures). Also, let $S_0$ be the initial state space, 
\[
\begin{split}
&S_1=\{\text{``win a lottery"},\text{``lose a lottery"}\},\\
&U(\text{``win a lottery"})=\{\text{``squander"},\text{``save"}\},\\
&U(\text{``lose a lottery"})=\{\text{``squander"},\text{``save"}\}
\end{split}
\]
be the state and closed loop action space at stage 1, and 
\[
\begin{split}
S_2=\left\{\begin{array}{l}
\text{``win a lottery and squander"},\\
\text{``win a lottery and save"},\\
\text{``lose a lottery and squander"},\\
\text{``lose a lottery and save"}\end{array} \right\},\\
\end{split}
\]
be the state space at stage 2. Suppose there are no actions ($U(S_0)=\{\emptyset\},$), no cost ($c(x_0,u_0)=0$, $\forall (x_0,u_0)\in S_0\times U(S_0)$) and no constraint cost ($d(x_0,u_0)=0$, $\forall (x_0,u_0)\in S_0\times U(S_0)$) for winning/losing a lottery at stage 0. On the other hand, the stage-wise cost in stage 1 is as follows:
\[
\begin{split}
&c(`\text{`win a lottery"},u_1)=\left\{\begin{array}{ll}
-50&\text{if $u_1$=``squander"}\\
-30&\text{if $u_1$=``save"}
\end{array}\right.\\
&c(`\text{`lose a lottery"},u_1)=\left\{\begin{array}{ll}
-20&\text{if $u_1$=``squander"}\\
-10&\text{if $u_1$=``save"}
\end{array}\right.
\end{split}
\]
and the constraint stage-wise cost is as follows:
\[
\begin{split}
&d(`\text{`win a lottery"},u_1)=\left\{\begin{array}{ll}
1&\text{if $u_1$=``squander"}\\
0.05&\text{if $u_1$=``save"}
\end{array}\right.\\
&d(`\text{`lose a lottery"},u_1)=\left\{\begin{array}{ll}
0.4&\text{if $u_1$=``squander"}\\
0.2&\text{if $u_1$=``save"}
\end{array}\right.
\end{split}
\]

Suppose at time $0$, one has probability $\epsilon$ of winning a lottery and probability $1-\epsilon$ of losing a lottery where $0<\epsilon<<1$. If one wins the lottery, at stage 1, one can choose to squander (action 1) or to save (action 2). If one loses the lottery, at stage 1 one can choose to squander (action 1) or to save (action 2). In this example, the stage-wise cost represents a level of satisfaction in spending and it is inversely proportional to the money spent. On the other hand, the stage-wise constraint cost is the probability of going bankruptcy, which is directly proportional to the money spent. 

Consider $\epsilon=0.1$ and the risk threshold $r_0 = 0.3$. That is, the probability of winning lottery is $0.1$ and one wants to limit the probability of bankruptcy to be under $0.3$. Similar to Haviv's argument in \cite{haviv1996constrained}, if we keep the risk threshold constant at $0.3$ for all subsequent stages, the optimal policy decided at stage 0 is not to squander if one loses the lottery, and to squander if one wins the lottery. However, the optimal policy decided in stage 1 is to save if one loses or wins the lottery. This implies that the optimal control policies decided at stage 0 is time-inconsistent. 

On the other hand, suppose one finds the optimal control policies by solving  the Bellman's equation in (\ref{eq:T}). Then, from the value function for Problem $\mathcal{OPT}$, one obtains $r_2=0$ and $V(x_2,r_2)=0$. Now let $s_{11}=\text{`win a lottery"}$ and $s_{12}=\text{`lose a lottery"}$. At stage 1, the value function is as follows:
\[
\begin{split}
V_1(s_{11},r_1(s_{11}))=&\left\{\begin{array}{ll}
-50&\text{if $r_1(s_{11}) \geq 1$}\\
-30&\text{if $0.05\leq r_1(s_{11}) < 1$}\\
\infty&\text{otherwise}
\end{array}\right.\\
V_1(s_{12},r_1(s_{12}))=&\left\{\begin{array}{ll}
-20&\text{if $r_1(s_{12}) \geq 0.4$}\\
-10&\text{if $0.2\leq r_1(s_{12}) < 0.4$}\\
\infty&\text{otherwise}
\end{array}\right.
\end{split}
\]
and the optimal policy is as follows:
\[\small
\begin{split}
\pi^\ast_1(s_{11},r_1(s_{11}))=&\left\{\begin{array}{ll}
\text{``squander"}&\text{if $r_1(s_{11}) \geq 1$}\\
\text{``save"}&\text{if $0.05\leq r_1(s_{11}) < 1$}\\
\emptyset&\text{otherwise}
\end{array}\right.\\
\pi^\ast_1(s_{12},r_1(s_{12}))=&\left\{\begin{array}{ll}
\text{``squander"}&\text{if $r_1(s_{12}) \geq 0.4$}\\
\text{``save"}&\text{if $0.2\leq r_1(s_{12}) < 0.4$}\\
\emptyset&\text{otherwise}
\end{array}\right.
\end{split}
\]

At stage 0, with $r_0=0.3$, the value function is: 
\begin{alignat*}{2}
V_0(s_0,r_0)=\min_{r_1(\cdot)\in\reals^2}& &\quad&\!\!\!\!\! \left[0.1\times\left\{ \begin{array}{ll}
-50&\text{if $r_1(s_{11})\geq 1$}\\
-30&\text{if $0.05\leq r_1(s_{11})< 1$}\\
\infty&\text{if $ r_1(s_{11})<0.05$}\\
\end{array}\right.\right.\\
& &\quad &\!\!\!\!\!\!\!\!\!\!\!\!\!\!\!\left.+0.9\times\left\{ \begin{array}{ll}
-20&\text{if $r_1(s_{12})\geq 0.4$}\\
-10&\text{if $0.2\leq r_1(s_{12})< 0.4$}\\
\infty&\text{if $r_1(s_{12})< 0.2$}\\
\end{array}\right.\right]\\
\text{subject to}\!\!& &\quad& r_1(s_{11})\in[0.05,\infty],\,\, r_1(s_{12})\in [0.2,\infty],\\
& &\quad & 0.1\times r_1(s_{11})+0.9\times r_1(s_{12})\leq 0.3.
\end{alignat*}

This implies that the value function is 
\[
V_0(s_0,r_0)=-10(0.9)-30(0.1)=-12.
\]
Based on the risk-to-go update in equation (\ref{eq:T_L}), we have that
\[\small
\begin{split}
r^\ast(s_0,r_0)(s_{11})&=0.3+0.05-(0.1(0.05)+0.9(0.2))=0.165,\\
r^\ast(s_0,r_0)(s_{12})&=0.3+0.2-(0.1(0.05)+0.9(0.2))=0.315.
\end{split}
\]
Thus, the optimal history dependent policy at stage 0 is
\[
\begin{split}
\pi_0^\ast(h_0)&=\emptyset,\\
\pi_1^\ast(h_1)&=\left\{\begin{array}{ll}
\text{``save"}&\text{if $x_1=\text{`win a lottery"}$}\\
\text{``save"}&\text{if $x_1=\text{`lose a lottery"}$}\\
\end{array}\right.,
\end{split}
\]
i.e., this optimal policy is time consistent. 

\begin{figure}[htpb]
  \includegraphics[width = 0.5\textwidth]{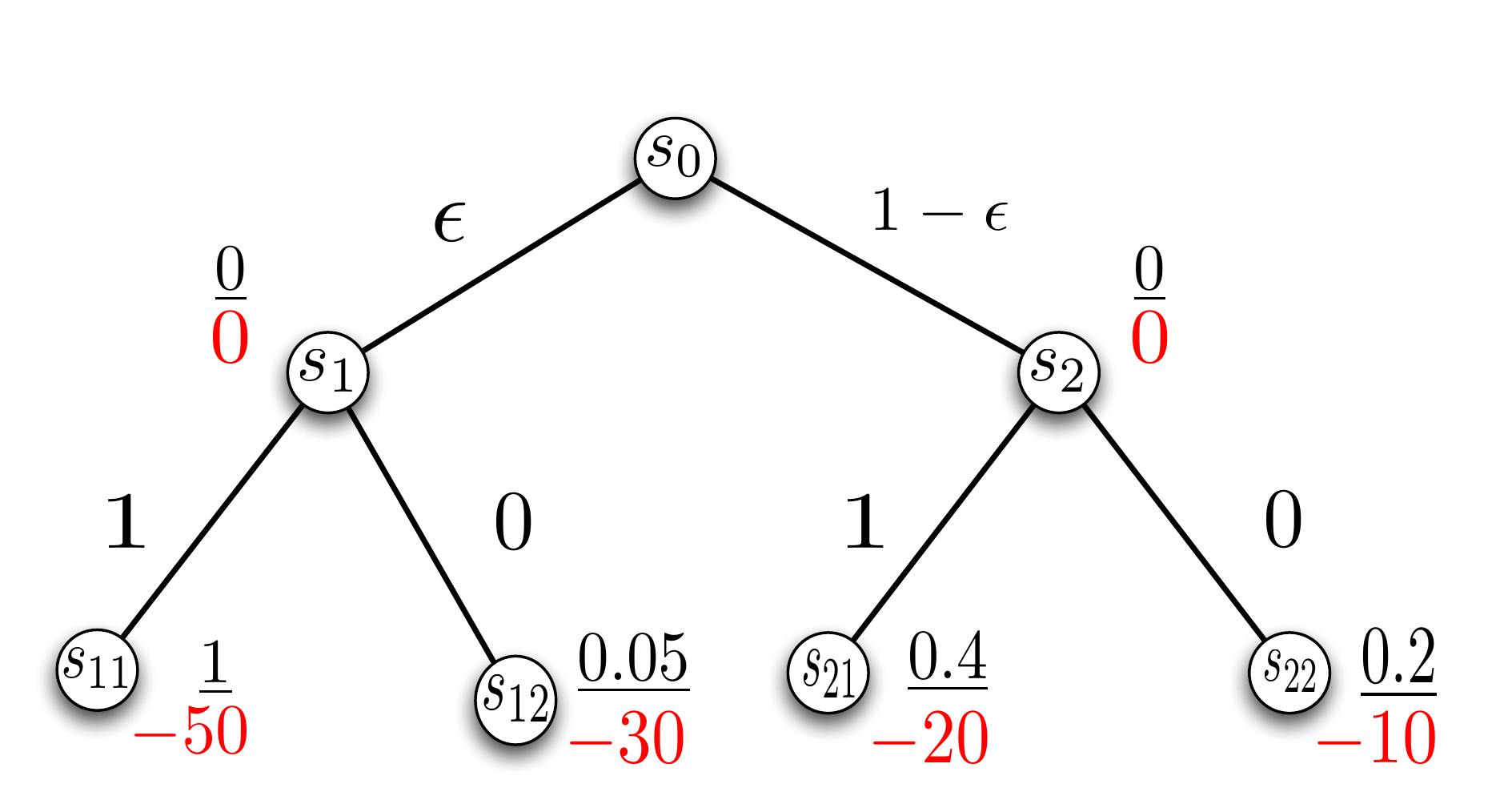}
  \caption{Transition probability, cost function and constraint cost function under policy 1 (squander). The red numbers indicates the stage-wise cost. The underlined black numbers indicates the stage-wise constraint cost}\label{fig:sm_act_1}
\end{figure}
\begin{figure}[htpb]
  \includegraphics[width = 0.5\textwidth]{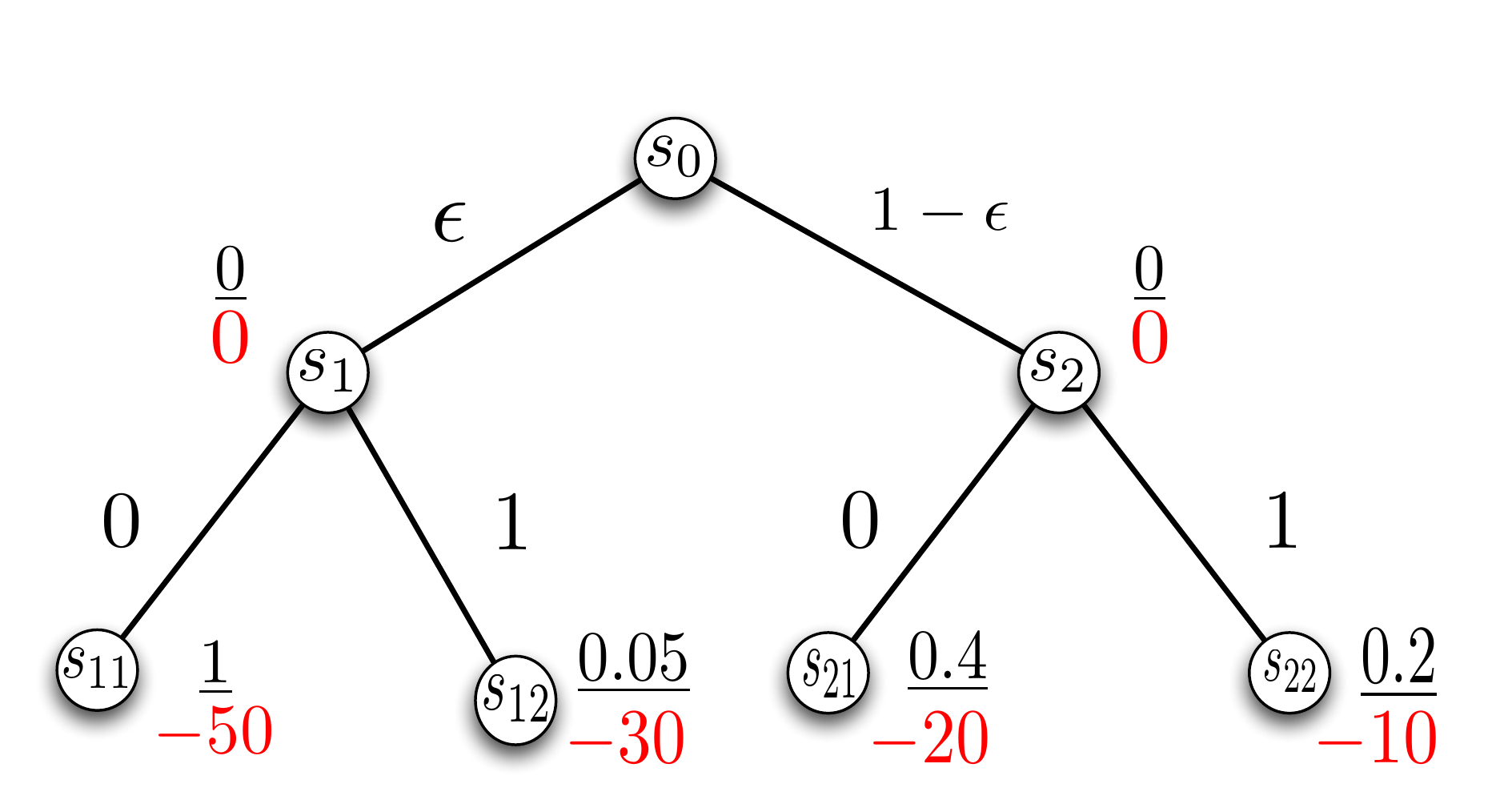}
 \caption{Transition probability, cost function and constraint cost function under policy 2 (save).  The red numbers indicates the stage-wise cost. The underlined black numbers indicates the stage-wise constraint cost.} \label{fig:sm_act_2}
\end{figure}

\section{Conclusion}\label{sec:conc}
In this paper we study time-consistency of risk constrained problem where the risk metric is time consistent. From the Bellman optimality condition in \cite{Chow_Pavone_13_1}, we establish an analytical ``risk-to-go" that results in a time consistent optimal policy.  The effectiveness of the analytical solution is also justified by solving Haviv's counter-example  \cite{haviv1996constrained} in time inconsistent planning. Future work includes extending the above analysis to large scale risk constrained decision making (via the use of approximate dynamic programming) that provides time consistent policies. 

\bibliographystyle{unsrt} 
\bibliography{ref_dyn_pro}  

\begin{thebibliography}{10}

\bibitem{Chow_Pavone_13_1}
Y.~Chow and M.~Pavone.
\newblock {Stochastic optimal control with dynamic, time-consistent risk
  constraints}.
\newblock In {\em American Control Conference}, 2013.

\bibitem{haviv1996constrained}
M.~Haviv.
\newblock On constrained {M}arkov decision processes.
\newblock {\em Operations research letters}, 19(1):25--28, 1996.

\bibitem{artzner2007coherent}
P.~Artzner, F.~Delbaen, J.~Eber, D.~Heath, and H.~Ku.
\newblock Coherent multiperiod risk adjusted values and {B}ellman's principle.
\newblock {\em Annals of Operations Research}, 152(1):5--22, 2007.

\bibitem{Acciaio_11}
B.~Acciaio, H.~Follmer, and I.~Penner.
\newblock {\em {Dynamic convex risk measures}}, chapter~1, pages 1--34.
\newblock Springer-Verlag Berlin Heidelberg, 2011.

\bibitem{Roorda:05}
B.~Roorda, J.~M. Schumacher, and J.~Engwerda.
\newblock {Coherent acceptability measures in multi-period models}.
\newblock {\em Mathematical Finance}, 15(4):589--612, 2005.

\bibitem{hardy2004iterated}
M.~Hardy and J.~Wirch.
\newblock The iterated {CTE}: a dynamic risk measure.
\newblock {\em North American Actuarial Journal}, 8(4):62--75, 2004.

\bibitem{Riedel_03}
F.~Riedel.
\newblock {Dynamic coherent risk measures}.
\newblock Technical report, Stanford University, 2003.

\bibitem{Boda_06}
K.~Boda and J.~A.Filar.
\newblock {Time consistent dynamic risk measures}.
\newblock {\em Mathematical Methods in Operations Research}, 63(1):169--186,
  2006.

\bibitem{Follmer_10}
H.~Follmer and A.~Schied.
\newblock {Coherent and convex risk measures}.
\newblock In {\em Encyclopedia of Quantitative Finance}, number 355-363. John
  Wiley \& Sons, 2010.

\bibitem{shapiro_12}
A.~Shapiro.
\newblock {Minimax and risk averse multistage stochastic programming}.
\newblock {\em European Journal of Operational Research}, 219(3):719--726,
  2012.

\bibitem{rus_09}
A.~Ruszczynski.
\newblock {Risk averse dynamic programming for Markov decision process}.
\newblock {\em Journal of Mathematical Programming}, 125(2):235--261, 2010.

\bibitem{Iancu_11}
D.~A. Iancu, M.~Petrik, and D.~Subramanian.
\newblock {Tight approximations of dynamic risk measures}.
\newblock 2013.
\newblock Submitted, available at \url{http://arxiv.org/abs/1106.6102}.

\bibitem{Osogami_12}
T.~Osogami and T.~Morimura.
\newblock {Time-consistency of optimization problems}.
\newblock Technical report, IBM Japan, 2010.

\bibitem{ross1989markov}
K.~Ross and R.~Varadarajan.
\newblock Markov decision processes with sample path constraints: the
  communicating case.
\newblock {\em Operations Research}, 37(5):780--790, 1989.

\bibitem{sennott1995another}
L.~Sennott.
\newblock Another set of conditions for average optimality in {M}arkov control
  processes.
\newblock {\em Systems \& control letters}, 24(2):147--151, 1995.

\bibitem{strotz1955myopia}
R.~Strotz.
\newblock Myopia and inconsistency in dynamic utility maximization.
\newblock {\em The Review of Economic Studies}, pages 165--180, 1955.

\bibitem{Piunovskiy_06}
A.~Piunovskiy.
\newblock {Dynamic programming in constrained Markov decision process}.
\newblock {\em Control and Cybernetics}, 35(3):646--660, 2006.

\bibitem{Sniedovich_80}
M.~{Sniedovich}.
\newblock {A variance-constrained reservoir control problem}.
\newblock {\em Water Resources Research}, 16:271--274, 1980.

\bibitem{Mannor_11}
S.~Mannor and J.~N. Tsitsiklis.
\newblock {Mean-variance optimization in Markov decision processes}.
\newblock In {\em International Conference on Machine Learning}, 2011.

\bibitem{ono2008iterative}
M.~Ono and B.~Williams.
\newblock Iterative risk allocation: A new approach to robust model predictive
  control with a joint chance constraint.
\newblock In {\em Decision and Control, 2008. CDC 2008. 47th IEEE Conference
  on}, pages 3427--3432. IEEE, 2008.

\bibitem{blackmore2010probabilistic}
L.~Blackmore, M.~Ono, A.~Bektassov, and B.~Williams.
\newblock A probabilistic particle-control approximation of chance-constrained
  stochastic predictive control.
\newblock {\em Robotics, IEEE Transactions on}, 26(3):502--517, 2010.

\bibitem{blackmore2009convex}
Lars Blackmore and Masahiro Ono.
\newblock Convex chance constrained predictive control without sampling.
\newblock In {\em Proceedings of the AIAA Guidance, Navigation and Control
  Conference}, pages 7--21, 2009.

\bibitem{borkar2010risk}
V.~Borkar and R.~Jain.
\newblock Risk-constrained {M}arkov decision processes.
\newblock In {\em Decision and Control (CDC), 2010 49th IEEE Conference on},
  pages 2664--2669. IEEE, 2010.

\bibitem{Rockafellar_Uryasev_02}
R.~Rockafellar and S.~Uryasev.
\newblock {Conditional value-at-risk for general loss distributions}.
\newblock {\em Journal of Banking \& Finance}, 26(7):1443--1471, 2002.

\bibitem{artzner_delbaen_eber_heath_98}
P.~Artzner, F.~Delbaen, J.~Eber, and D.~Heath.
\newblock {Coherent measures of risk}.
\newblock {\em Mathematical finance}, 9(3):203--228, 1998.

\bibitem{bertsekas_05}
D.~Bertsekas.
\newblock {\em {Dynamic programming and optimal control}}.
\newblock Athena Scientific, 2005.

\bibitem{chong2012bellman}
E.~Chong, S.~Miller, and J.~Adaska.
\newblock On {B}ellman?s principle with inequality constraints.
\newblock {\em Operations Research Letters}, 40(2):108--113, 2012.

\end{thebibliography}
\newpage
\appendix

\subsection{Proof of Theorem \ref{Bellman_M_thm}}
The proof follows from the Bellman's equation with risk constraints. First, we want to prove that 
\begin{equation}\label{eq_V_V_M}
V^M_{k}(x_k, r_k)=V_{k}(x_k, r_k)
\end{equation}
 for any $k\in\{0,\ldots,N-1\}$, $x_k\in S$ and $r_k\in\Phi_k(x_k)$ and $k\in\{0,\ldots,N-1\}$. At $k=N-1$, by definition, $V^M_{N-1}(x_{N-1}, r_{N-1})=\overline{T}_{N-1}[V_N^M](x_{N-1},r_{N-1})=\overline{T}_{N-1}[V_N](x_{N-1},r_{N-1})$. Since $V_{N}(x_N, r_N)=V^M_{N}(x_N, r_N)=0$ for any $r_N\in[0,\infty]$, by Lemma \ref{tech_lem_F}, one obtains $V^M_{N-1}(x_{N-1}, r_{N-1})=\overline{T}_{N-1}[V_N](x_{N-1},r_{N-1})=T_{N-1}[V_N](x_{N-1},r_{N-1})=V_{N-1}(x_{N-1}, r_{N-1})$. Thus, equation (\ref{eq_V_V_M}) holds for $k=N-1$. By inductive hypothesis, suppose for $k=j+1$, 
\[
V^{M}_{j+1}(x_{j+1},r_{j+1}) =V_{j+1}(x_{j+1},r_{j+1})
\]
Then for $k=j$. Let $\pi^* \in \Pi^{M}_j$ be the optimal policy that yields the optimal cost $V^{M}_{j}(x_j, r_j)$ and $r^\ast$ be the sequence of optimal risk threshold update functions. By applying the law of total expectation, we can write:
\begin{equation*}
\begin{split}
V^{M}_{j}(x_j, r_j) &= \expectation{\sum_{i=j}^{N-1} \, c_i(x_i, \pi^*_i(x_i,r_i))} \\
&=  \expectation{c_j(x_j, \pi_j^*(x_j,r_j))+   J_N^{ \pi^\ast}(x_{j+1}) }.
\end{split}
\end{equation*}
Clearly, $\pi^\ast$ is a feasible policy for the $j+1^{\text{th}}$ tail subproblem of Problem $\mathcal{OPT}^\textrm{M}$ with  $x_{j+1}\in S$ and $r^\ast_{j+1}\in\Phi_{j+1}(x_{j+1})$. Collecting the above results, we can write 
\begin{equation*}
\begin{split}
V^{M}_{j}&(x_j, r_j)  \geq   \expectation{c_j(x_j, \pi_j^*(x_j,r_j)) + V^{M}_{j+1}(x_{j+1}, r^\ast_{j+1})}\\
&\geq \overline{T}_j[V^{M}_{j+1}](x_j, r_j)= \overline{T}_j[V_{j+1}](x_j, r_j)\\
&= {T}_j[V_{j+1}](x_j, r_j)=V_j(x_j,r_j).
\end{split}
\end{equation*}
The second inequality follows from the fact that 
\[
\risk_j(r_{j+1}^\ast)+d_j(x_j,\pi^\ast_j(x_j,r_j)))\leq r_j.
\]
and the first equality follows from induction's assumption.

On the other hand, for given pair $(x_j, r_j)$, where $r_j \in \Phi_j(x_j)$, let $u^*(x_j,r_j)$ and $r^{\prime, *}(x_{j},r_{j})(x_{j+1})$ be the minimizers in $\overline{T}_j[V^{M}_{j+1}](x_j, r_j)$. Construct  a policy $\bar \pi \in \Pi^{M}_j$ as follows: $\bar \pi_j(x_j,r_j) = u^*(x_j,r_j)$ and $\bar \pi_i(x_i,r_i) = \pi^*_i(x_i,r_i)$ for $i\geq j+1$. Therefore, the policy $\bar \pi \in \Pi^M_j$ is a feasible policy for the $j+1^{\text{th}}$ tail subproblem of Problem $\mathcal{OPT}^M$ with $r_j\in\Phi_j(x_j)$. 
Hence, one easily obtains:
\begin{equation*}
\begin{split}
V^{M}_{j}&(x_j, r_j)\leq J^{\bar \pi}_N(x_j)\!=\!  \expectation{c_j(x_j, \bar \pi_j(x_j,r_j)))+ J^{\pi^*}_N(x_{j+1})}\\
 =&\overline{T}_j[V^M_{j+1}](x_j, r_j)=\overline{T}_j[V_{j+1}](x_j, r_j)=V_j(x_j,r_j). 
\end{split}
\end{equation*}

By combining both steps, the claim in equation (\ref{eq_V_V_M}) is proved by induction. Furthermore, by the equation (\ref{eq_V_V_M}) and theorem \ref{TC_good}, the following expressions hold:
\begin{equation}\label{Bellman_M}
\begin{split}
V^{M}_{k}(x_{k},r_{k})& =V_{k}(x_{k},r_{k})=\overline{T}_k[V_{k+1}](x_k,r_k)\\
&=\overline{T}_k[V^M_{k+1}](x_k,r_k).
\end{split}
\end{equation}
By repeatedly applying this Bellman's equation, one obtains
\begin{equation}\label{Bellman_M_seq}
V^M_{0}(x_0, r_0) = \overline{T}_0[\overline{T}_{1}[\ldots[\overline{T}_{N-1}[V^M_{N}]]\ldots]](x_0,r_0),
\end{equation}
Also, define the sequence of optimal policy: $\pi^\ast=\{\pi_0^\ast,\ldots,\pi_{N-1}^\ast\}$ and sequence of risk-to-go: $r^\ast=\{r_1^\ast,\ldots,r_{N}^\ast\}$ found by solving the sequence of Bellman's recursions in equation (\ref{Bellman_M_seq}). Now, consider the k subsequence: $\tilde\pi^\ast=\{\pi^\ast_{k},\ldots,\pi^\ast_{N-1}\}\in\Pi^M_k$ of $\pi^\ast$ and the k subsequence: $\tilde{r}^\ast=\{r^\ast_{k+1},\ldots,r^\ast_{N}\}$. By definition, one obtains,
\[
d(x_j, \pi_j^\ast(x_j,r_j)) + \risk_j(r^\ast_{j+1})  \leq r_j,\,\, \forall N-1\geq j\geq k.
\]
and the Bellman's equation implies
\[
J^{\tilde{\pi}^\ast}_N(x_k)\!=\! \overline{T}_k[\ldots[\overline{T}_{N-1}[V^M_{N}]]\ldots](x_k,r_k)\!=\!V^M_k(x_k,r_k).
\]
This implies that $\tilde\pi^\ast$ is a sequence of optimal control policy of the k-tail subproblem of Problem $\mathcal{OPT}^M$, for any state $x_k\in S$, and any risk threshold $r_{k}\in\Phi_k(x_k)$. 

\addtolength{\textheight}{-14cm}   
\end{document}